\documentclass[a4paper]{amsart}
\usepackage[english]{babel}
\usepackage{amssymb}
\usepackage[initials]{amsrefs}
\usepackage{tikz}

\usepackage[arrow,matrix,curve]{xy}


\allowdisplaybreaks



\newcommand{\into}{{\triangleright}}

\newcommand{\op}{\operatorname}
\newcommand{\co}{\colon}
\newcommand{\id}{\mathrm{id}}
\newcommand{\NN}{\mathbb{N}}

\newcommand{\ZZ}{\mathbb{Z}}
\newcommand{\CC}{\mathbb{C}}

\newcommand{\calB}{\mathcal{B}}
\newcommand{\calC}{\mathcal{C}}

\newcommand{\calM}{\mathcal{M}}
\newcommand{\calN}{\mathcal{N}}
\newcommand{\calO}{\mathcal{O}}
\newcommand{\calP}{\mathcal{P}}
\newcommand{\calQ}{\mathcal{Q}}
\newcommand{\calR}{\mathcal{R}}
\newcommand{\calS}{\mathcal{S}}

\newtheorem{thm}{Theorem}[section]
\newtheorem{lem}[thm]{Lemma}
\newtheorem{cor}[thm]{Corollary}
\newtheorem{prop}[thm]{Proposition}
\theoremstyle{definition}
\newtheorem{defi}[thm]{Definition}

\theoremstyle{remark}
\newtheorem{rem}[thm]{Remark}

\title{$L^2$--invisibility of symmetric operad groups}
\author{Werner Thumann}
\address{Karlsruhe Institute of Technology, Karlsruhe, Germany}
\subjclass[2010]{Primary 20J05; Secondary 22D10, 18D50}
\keywords{Operad groups, Thompson groups, group homology, $l^2$--homology}

\begin{document}

\begin{abstract}
	We show a homological result for the class of planar or symmetric operad groups: We show that under certain 
	conditions, group (co)homology of such groups with certain coefficients vanishes in all dimensions, provided it vanishes 
	in dimension $0$. This can be applied for example to $l^2$--homology or cohomology with coefficients in the group ring. 
	As a corollary, we obtain explicit vanishing results for Thompson-like groups such as the Brin--Thompson groups $nV$.
\end{abstract}
\maketitle

\section{Introduction}

In \cite{sa-th:laa} it is shown that a certain class of groups acting on compact ultrametric spaces, the so-called
dually contracting local similarity groups, are $l^2$--invisible. The latter means that group homology with group von Neumann
algebra coefficients vanishes in every dimension, i.e.
\[H_k\big(G,\calN(G)\big)=0\]
for all $k\geq 0$ where $\calN(G)$ denotes the group von Neumann algebra of $G$. If $G$ is of type $F_\infty$, i.e.~there is
a classifying space for $G$ with finitely many cells in each dimension, then this is equivalent to
\[H_k\big(G,l^2(G)\big)=0\]
for all $k\geq 0$ by \cite{lue:lta}*{Lemmas 6.98 on p.~286 and 12.3 on p.~438}.

In \cite{thu:ogatfp} the author proposed to study fundamental groups of categories naturally associated to operads. This class
of groups, called operad groups, contains a lot of Thompson-like groups already existent in the literature. Among these are the
above mentioned local similarity groups (see \cite{thu:ogatfp}*{Subsection 3.5}).

This article is mainly concerned with generalizing the results of \cite{sa-th:laa} to the setting of symmetric operad groups 
which form a much larger class of groups. The proof in \cite{sa-th:laa} consisted of constructing a suitable simplicial complex
on which the group in question acts and then applying a spectral sequence associated to this action which computes the
homology of the group in terms of the homology of the stabilizer subgroups. The proof in the case of operad groups goes exactly
the same way. However, it is a priori unclear how to construct the simplicial complex. The reason is the following: A local 
similarity group is defined as a representation, i.e.~as a group of homeomorphisms of a compact ultrametric space. This space is 
used to construct the simplicial complex as a poset of partitions of this space. The case of operad groups is more abstract. A priori,
there is no canonical space comparable to these ultrametric spaces on which an operad group acts. However, these spaces, called 
limit spaces, are conjectured to exist if the operad satisfies the calculus of fractions 
(see \cite{thu:ogatfp}*{Subsection 3.3} for the latter notion). We don't use these limit spaces here. Instead, we will take 
the conjectured correspondence between calculus of fractions operads and their limit spaces as a motivation to mimic the necessary
notions for the construction of the desired simplicial complex in terms of the operad itself.

As in \cite{sa-th:laa}, we briefly want to discuss the relationship between these results and Gromov's
\emph{Zero-in-the-spectrum conjecture} (see \cite{gro:lrm}). The algebraic version of this conjecture states that if $\Gamma=\pi_1(M)$
is the fundamental group of a closed aspherical Riemannian manifold, then there always exists a dimension $p\geq 0$ such
that $H_p(\Gamma,\calN\Gamma)\neq 0$ or equivalently $H_p(\Gamma,l^2\Gamma)\neq 0$. Conjecturally, the fundamental groups of 
closed aspherical manifolds are precisely the Poincar\'e duality groups $G$ of type $F$, i.e.~there is a compact classifying space
for $G$ and a natural number $n\geq 0$ such that
\[H^i(G,\ZZ G)=\begin{cases}0&\text{if }i\neq n\\\ZZ&\text{if }i=n\end{cases}\]
(see \cite{dav:pdg}). Dropping Poincar\'e duality and relaxing type $F$ to type $F_\infty$, we arrive at a more general question which
has been posed by L\"uck in \cite{lue:lta}*{Remark 12.4 on p.~440}: If $G$ is a group of type $F_\infty$, does there always exist a $p$
with $H_p(G,\calN G)\neq 0$? In \cite{thu:ogatfp} we discuss conditions for operads which imply that the associated operad groups 
are of type $F_\infty$. Combining this with the results in the present article, we obtain a large class of groups of type $F_\infty$ 
which are also $l^2$--invisible. This class contains the well-known symmetric Thompson group $V$ and consequently, L\"uck's question 
has to be answered in the negative. Unfortunately, all these groups $G$ are neither of type $F$ nor satisfy Poincar\'e duality since, 
as another corollary of our main theorem (Theorem \ref{20869}), we can show $H^k(G,\ZZ G)=0$ for all $k\geq 0$.

\subsection{Prerequisites} 

The present article is based on Sections 2 and 3 of \cite{thu:ogatfp}.

\subsection{Notation and Conventions}

When $f\co A\rightarrow B$ and $g\co B\rightarrow C$ are two composable arrows, we write $f*g$ for the composition $A\rightarrow C$
instead of the usual notation $g\circ f$. Consequently, it is often better to plug in arguments from the left. When we do this, 
we use the notation $x\into f$ for the evaluation of $f$ at $x$. However, we won't entirely drop the usual notation $f(x)$ and use 
both notations side by side. Objects of type $\operatorname{Aut}(X)$ will be made into a group by the definition $fg=f\cdot g:=f*g$.
Conversely, a group $G$ is considered as a groupoid with one object and arrows the elements in $G$ together with the composition
$f*g:=f\cdot g$.

\subsection{Acknowledgments}

I want to thank my PhD adviser Roman Sauer for the opportunity to pursue mathematics, for his guidance, encouragement and
support over the last few years.
I also gratefully acknowledge financial support by the DFG grants 1661/3-1 and 1661/3-2.

\section{Statement of the main theorem}

\begin{defi}
	Let $\calM G$ be a $\ZZ G$--module for every group $G$. We call $\calM$
	\begin{itemize}
		\item {\it K\"unneth} if for every two groups $G_1,G_2$ and $n_1,n_2\in\ZZ$ with $n_i\geq -1$ the following is satisfied:
		\[\left.\begin{array}{r}\forall_{k\leq n_1}\ H_k(G_1,\calM G_1)=0\\\forall_{k\leq n_2}\ H_k(G_2,\calM G_2)=0\end{array}
			\right\}\ \Longrightarrow\ \forall_{k\leq n}\ H_k(G,\calM G)=0\]
		where $G:=G_1\times G_2$ and $n:=n_1+n_2+1$.
		\item {\it inductive} if whenever $H$ and $G$ are groups with $H$ a subgroup of $G$ and $k\geq 0$, then we have that
		\[H_k(H,\calM H)=0\hspace{3mm}\text{implies}\hspace{3mm}H_k(H,\calM G)=0\]
	\end{itemize}
	Let $\mathfrak{P}$ be a property of groups. Then we say that $\calM$ is $\mathfrak{P}$--K\"unneth if the 
	property K\"unneth has to be satisfied only for $\mathfrak{P}$--groups $G_1,G_2$. We say that $\calM$ is $\mathfrak{P}$--inductive
	if the property inductive has to be satisfied only for $\mathfrak{P}$--subgroups $H$ of the arbitrary group $G$.
	Furthermore, one can formulate these two properties also in the cohomological case.
\end{defi}

\begin{defi}
	Let $\calO$ be a planar or symmetric or braided operad and $X$ an object in $\calS(\calO)$. We say that $X$ is
	\begin{itemize}
		\item {\it split} if there are objects $A_1,A_2,A_3$ and an arrow $A_1\otimes X\otimes A_2\otimes X\otimes A_3\rightarrow X$
		in $\calS(\calO)$.
		\item {\it progressive} if for every arrow $Y\rightarrow X$ there are objects $A_1,A_2$ and an arrow
		$A_1\otimes X\otimes A_2\rightarrow Y$ such that the coordinates of $X$ are connected to only one
		operation in this arrow (see Figure \ref{57690}).
	\end{itemize}
\end{defi}

\begin{figure}[!ht]
\begin{center}\begin{tikzpicture}[scale=0.65]
	\draw (0,4.6) -- +(-1,0);
	\draw (0,5.4) -- +(-1,0);
	\draw (0,4.1) -- +(0,1.8) -- +(1.8,0.9) -- +(0,0);
	\draw (0,3) -- +(-1,0);
	\draw (0,2.1) -- +(0,1.8) -- +(1.8,0.9) -- +(0,0);
	\draw (0,0.4) -- +(-1,0);
	\draw (0,1) -- +(-1,0);
	\draw (0,1.6) -- +(-1,0);
	\draw (0,0.1) -- +(0,1.8) -- +(1.8,0.9) -- +(0,0);
	\draw[dotted] (-0.8,0) -- +(0,6);
	\draw[white, line width=4pt] (-1,1.6) .. controls (-2,1.6) and (-3,3.5) .. (-4,3.5);
	\draw (-1,1.6) .. controls (-2,1.6) and (-3,3.5) .. (-4,3.5);
	\draw[white, line width=4pt] (-1,1) .. controls (-2,1) and (-3,0.5) .. (-4,0.5);
	\draw (-1,1) .. controls (-2,1) and (-3,0.5) .. (-4,0.5);
	\draw[white, line width=4pt] (-1,4.6) .. controls (-2,4.6) and (-3,1.5) .. (-4,1.5);
	\draw (-1,4.6) .. controls (-2,4.6) and (-3,1.5) .. (-4,1.5);
	\draw[white, line width=4pt] (-1,3) .. controls (-2,3) and (-3,5.5) .. (-4,5.5);
	\draw (-1,3) .. controls (-2,3) and (-3,5.5) .. (-4,5.5);
	\draw[white, line width=4pt] (-1,0.4) .. controls (-2,0.4) and (-3,2.5) .. (-4,2.5);
	\draw (-1,0.4) .. controls (-2,0.4) and (-3,2.5) .. (-4,2.5);
	\draw[white, line width=4pt] (-1,5.4) .. controls (-2,5.4) and (-3,4.5) .. (-4,4.5);
	\draw (-1,5.4) .. controls (-2,5.4) and (-3,4.5) .. (-4,4.5);
	\draw  (-4.8,5.6) rectangle (-4.2,4.4);
	\draw  (-4.8,3.6) rectangle (-4.2,2.4);
	\draw  (-4.8,1.6) rectangle (-4.2,0.4);
	\draw  (2,5.2) rectangle (2.6,0.8);
	\node at (-4.5,5) {$A_1$};
	\node at (-4.5,3) {$X$};
	\node at (-4.5,1) {$A_2$};
	\node at (2.3,3) {$Y$};
\end{tikzpicture}\end{center}
\caption{An arrow $A_1\otimes X\otimes A_2\rightarrow Y$ such that $X$ is only connected to one operation.}\label{57690}
\end{figure}
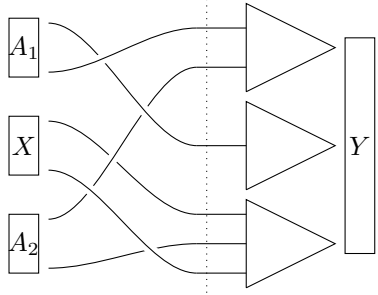

\begin{rem}\label{92437}
	If $X$ is just a single color, then $X$ is split if and only if there is an operation with output color $X$ and and at 
	least two inputs of color $X$. If $\calO$ is monochromatic and $X\neq I$ is an object of $\calS(\calO)$, then $X$ is 
	split if and only if there is at least one operation in $\calO$ with at least two inputs. So in the monochromatic case, 
	the condition split is in fact a property of $\calO$.
\end{rem}

\begin{rem}\label{71138}
	If $X$ is just a single color, then $X$ is progressive if and only if for every operation $\theta$ with output
	color $X$ there is another operation $\phi$ with at least one input of color $X$ and at least one input of 
	$\theta$ has the same color as the output of $\phi$. Now assume that $\calO$ is monochromatic. Then an object
	$X\neq I$ in $\calS(\calO)$ (which is just a natural number $X>0$, e.g.~$X=3$) is progressive if and only if there 
	is an operation in $\calO$ with at least $X$ inputs (e.g.~$3$ inputs). Note that $X=1$ is always progressive
	in the monochromatic case.
\end{rem}

\begin{thm}\label{20869}
	Let $\calO$ be a planar or symmetric operad which satisfies the calculus of fractions.
	Let $\calM$ be a coefficient system which is K\"unneth and inductive. Let $X$ be a split progressive object of
	$\calS(\calO)$. Set $\Gamma:=\pi_1(\calO,X)$. Then
	\[H_0(\Gamma,\calM\Gamma)=0\hspace{5mm}\Longrightarrow\hspace{5mm}\forall_{k\geq 0}\ H_k(\Gamma,\calM\Gamma)=0\]
	The same is true for cohomology.\\
	More generally, let $\mathfrak{P}$ be a property of groups which is closed under taking products. 
	Then the statement is true also for coefficient systems $\calM$ which are only $\mathfrak{P}$--K\"unneth and 
	$\mathfrak{P}$--inductive, provided that $\Gamma$ satisfies $\mathfrak{P}$.
\end{thm}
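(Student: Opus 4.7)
The plan is to transplant the strategy of \cite{sa-th:laa} from ultrametric spaces to the operadic category $\calS(\calO)$. Since no canonical limit space is available, I would construct a simplicial complex $K$ directly inside the category: its vertices are ``decompositions of $X$'', meaning equivalence classes of arrows $Y\to X$ in $\calS(\calO)$, where two arrows are identified when they are connected by a zigzag via the calculus of fractions; these are ordered by refinement (one decomposition factors through another), and $K$ is the order complex. The group $\Gamma=\pi_1(\calO,X)$ acts on $K$ through its action on the set of arrows into $X$, and the equivariant spectral sequence
\[E^1_{p,q}=\bigoplus_{[\sigma]\in K_p/\Gamma}H_q(\Gamma_\sigma,\calM\Gamma)\Longrightarrow H_{p+q}(\Gamma,\calM\Gamma)\]
is the main computational tool.

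Two geometric inputs must be verified. First, $K$ should be contractible; this reduces to showing that the refinement poset is directed, which is where the \emph{split} hypothesis on $X$ enters---given any two decompositions of $X$, a common refinement can be manufactured by inserting an operation whose existence is guaranteed by $X$ being split. Second, for a simplex whose top vertex is a decomposition $X\leftarrow Y_1\otimes\cdots\otimes Y_n$, the \emph{progressive} hypothesis together with the calculus of fractions should identify the stabilizer $\Gamma_\sigma$ with the product $\pi_1(\calO,Y_1)\times\cdots\times\pi_1(\calO,Y_n)$, possibly up to a symmetric-group twist arising from permutations of coordinates in the symmetric case. The role of ``progressive'' is precisely to ensure that arrows respecting the decomposition factor coordinate-wise, producing the product structure.

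Given these two ingredients, the spectral sequence argument runs cleanly. The assumption $H_0(\Gamma,\calM\Gamma)=0$ propagates to every stabilizer: an elementary coinvariants calculation, using the product structure and the fact that the factor groups arise from the same operad, gives $H_0(\Gamma_\sigma,\calM\Gamma_\sigma)=0$. Iterating the K\"unneth property yields vanishing of $H_k(\Gamma_\sigma,\calM\Gamma_\sigma)$ in ranges growing with the number of factors, and the inductive property converts this into the vanishing of $H_k(\Gamma_\sigma,\calM\Gamma)$ needed for the $E^1$-page. A double induction on $k$ and on a complexity measure of $X$ (e.g.\ the minimal length of a decomposition), combined with the contractibility of $K$ to kill the abutment in positive degrees, then yields $H_k(\Gamma,\calM\Gamma)=0$ for all $k$. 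The cohomological version is symmetric, and the $\mathfrak{P}$-parametrized statement follows because the stabilizers remain $\mathfrak{P}$-groups under the stated closure assumption on $\mathfrak{P}$.

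The main obstacle will be the geometric construction itself: making precise the notion of ``decomposition of $X$'' using the calculus of fractions, proving contractibility of $K$, and above all verifying the product decomposition of the stabilizers together with any symmetric-group corrections in the braided/symmetric case. Once these categorical analogues of the partition-lattice arguments in \cite{sa-th:laa} are in place, the homological portion is a formal consequence of the K\"unneth and inductive properties of $\calM$.
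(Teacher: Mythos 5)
Your broad strategy matches the paper's, but there are genuine gaps where the outline would not go through as written. First, the stabilizer $\Gamma_\sigma$ of a simplex is \emph{not} a product of fundamental groups, and progressivity is not what produces the product structure. What the paper actually proves (Proposition \ref{88520}) is that the \emph{pointwise stabilizer} $\Lambda_\sigma$ of the top-vertex partition is a product; one must then separately show $\Lambda_\sigma\vartriangleleft\Gamma_\sigma$, locate a normal subgroup $\Lambda'_\sigma\cong\Upsilon^n$ inside $\Lambda_\sigma$, and run the tower $\Upsilon^n\cong\Lambda'_\sigma\vartriangleleft\Lambda_\sigma\vartriangleleft\Gamma_\sigma$ through the Hochschild--Serre spectral sequence twice to control $H_*(\Gamma_\sigma,\calM\Gamma)$. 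This step is not a formality and your proposal elides it. Relatedly, ``equivalence classes of arrows $Y\to X$'' is too coarse to be the vertices: one needs the paper's \emph{marked arrows}, which record which coordinates of the domain are grouped together, to make ``pieces of a partition'' (submultiballs) well-defined at all; without markings the product decomposition of $\Lambda_\sigma$ cannot even be stated.

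Second, the proposed double induction on $k$ and a ``complexity measure of $X$'' has no sound basis: there is no monotone measure under which the stabilizers become simpler, nor a base case. The paper instead runs the argument over a family of posets $\mathbb{P}_n$, consisting of partitions with at least $n$ pieces equivalent to a fixed split object $y$; then every simplex stabilizer contains a copy of $\Upsilon^n$, iterating the K\"unneth property $(n-1)$ times gives vanishing up to degree $n-1$, and letting $n\to\infty$ covers all degrees. Establishing that $\mathbb{P}_n\neq\emptyset$ and that $\mathbb{P}_n$ is filtered is where \emph{both} split and progressive are used, contrary to your allocation of split to directedness and progressive to the stabilizer identification. Also, the assumption feeding the K\"unneth iteration is $H_0(\Upsilon,\calM\Upsilon)=0$ for the split object (equal to $\Gamma$ when $Y=X$), not an ``elementary coinvariants calculation'' propagating $H_0$ vanishing to stabilizers.
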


\begin{rem}\label{50091}
	Let $X,Y$ be objects in $\calS(\calO)$. Generalizing the notion of progressiveness, we say that $X$ is 
	$Y$--progressive if for every arrow $Z\rightarrow X$ there is an arrow $A_1\otimes Y\otimes A_2\rightarrow Z$
	and the coordinates of $Y$ are connected to only one operation in this arrow (call this the \emph{link condition}). 
	In particular, there is an arrow $A_1\otimes Y\otimes A_2\rightarrow X$.
	
	With this notion, we can formulate a slightly more general version of Theorem \ref{20869}: Let $\calO,\mathfrak{P},\calM$ be 
	as in the theorem. Let $X$ be an object of $\calS(\calO)$ and set $\Gamma=\pi_1(\calO,X)$. Assume there is a
	split object $Y$ such that $X$ is $Y$--progressive, $\Upsilon:=\pi_1(\calO,Y)$ satisfies $\mathfrak{P}$ and 
	$H_0(\Upsilon,\calM\Upsilon)=0$. Then $H_k(\Gamma,\calM\Gamma)=0$ for each $k\geq 0$. The same is true for 
	cohomology.
\end{rem}

\section{Proof of the main theorem}\label{96238}

We start with two general lemmas concerning the calculus of fractions.

\begin{lem}\label{25010}
	Let $\calC$ be a category satisfying the calculus of fractions. Then two square fillings of a given span
	can be combined to a common square filling. This means: Let $x,y$ be two arrows with the 
	same codomain and assume having two square fillings as in the diagram
	\begin{displaymath}\xymatrix@-5pt{
		\bullet\ar[dd]_x & & \bullet\ar[ll]_i\ar[dd]^h \\
		& \bullet\ar[ul]_j\ar[dr]^g & \\
		\bullet & & \bullet\ar[ll]^y
	}\end{displaymath}
	then we can complete this diagram to the commutative diagram
	\begin{displaymath}\xymatrix@-5pt{
		& & & \bullet\ar@{-->}@/_10pt/[dlll]_\alpha\ar@{-->}[dl]^\delta
		\ar@{-->}@/_8pt/[ddll]_\epsilon\ar@{-->}@/^10pt/[dddl]^\beta \\
		\bullet\ar[dd] & & \bullet\ar[ll]\ar[dd] & \\
		& \bullet\ar[ul]\ar[dr] & & \\
		\bullet & & \bullet\ar[ll] &
	}\end{displaymath}
\end{lem}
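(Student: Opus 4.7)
Label the objects so that the given diagram reads $x\co A\to C$, $y\co D\to C$ (the common-codomain pair), with first square filling $i\co B\to A$ and $h\co B\to D$ satisfying $i*x=h*y$, and second square filling $j\co E\to A$ and $g\co E\to D$ satisfying $j*x=g*y$. The task is to produce an object $F$ together with arrows $\alpha\co F\to A$, $\delta\co F\to B$, $\epsilon\co F\to E$, $\beta\co F\to D$ with $\alpha=\delta*i=\epsilon*j$ and $\beta=\delta*h=\epsilon*g$; note that the identity $\alpha*x=\beta*y$ then follows automatically from the two original square identities.

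The plan is to apply the two halves of the calculus of fractions in succession. First, apply the Ore square-filling axiom to the pair $i\co B\to A$ and $j\co E\to A$ (which has common codomain $A$), producing an object $F'$ and arrows $\delta'\co F'\to B$, $\epsilon'\co F'\to E$ with $\delta'*i=\epsilon'*j$; call this common composite $\alpha'\co F'\to A$. Post-composing with $y$ and using both given square identities yields
\[\delta'*h*y\;=\;\delta'*i*x\;=\;\alpha'*x\;=\;\epsilon'*j*x\;=\;\epsilon'*g*y,\]
so the two candidate maps $\delta'*h$ and $\epsilon'*g$ from $F'$ to $D$ agree after post-composition with $y$. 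Now invoke the cancellation half of the calculus of fractions to obtain an arrow $\gamma\co F\to F'$ with $\gamma*\delta'*h=\gamma*\epsilon'*g$. Setting $\delta:=\gamma*\delta'$, $\epsilon:=\gamma*\epsilon'$, $\alpha:=\gamma*\alpha'$, and $\beta:=\delta*h=\epsilon*g$ then gives the required data, and each demanded commutativity reduces to an identity built in at the previous step.

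The only delicate point is to recognize that the Ore property alone delivers compatibility over $A$ but leaves the composites into $D$ possibly distinct; the cancellation axiom is needed to pull them back into agreement. Once the discrepancy $\delta'*h$ vs.\ $\epsilon'*g$ is exposed and shown to vanish after postcomposition with $y$, one can absorb it by pre-composing with $\gamma$, and the rest is a routine bookkeeping verification using the two given square identities.
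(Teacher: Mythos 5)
Your proof is correct, and it is more economical than the one in the paper. The paper applies the square-filling axiom to the full composites $a:=i*x=h*y$ and $b:=j*x=g*y$, which share the ultimate codomain $C$; this yields a vertex mapping compatibly to $B$ and to $E$, but then one still has to repair compatibility of the induced triangles over both $A$ and over $D$. That costs two separate applications of the equalization axiom (one each for the pairs $ci,dj$ and $ch,dg$), followed by a second square filling to combine the two resulting equalizing arrows into one. You instead apply the square-filling axiom directly to $(i,j)$ at the common codomain $A$, which settles compatibility of the two triangles over $A$ automatically, leaving only the single discrepancy $\delta'*h$ versus $\epsilon'*g$ over $D$, which one equalization absorbs. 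So the tally is one square filling and one equalization for your argument, against two of each in the paper's. Both arguments are valid; your version has fewer moving parts, while the paper's keeps the $x$/$y$ (equivalently $A$/$D$) symmetry manifest by not privileging one corner over the other.
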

\begin{proof}
	Let $c,d$ be a square filling of $a:=ix=hy,b:=jx=gy$, i.e.~$ca=db$. Then $ch$ and $dg$ are two parallel
	arrows which are coequalized by $y$, i.e.~$(ch)y=(dg)y$. By the equalization property we find an equalizing
	arrow $k$ with $k(ch)=k(dg)$. By the same reasoning we find an arrow $l$ with $l(ci)=l(dj)$. Let $m,n$ be a
	square filling of $l,k$, i.e.~$ml=nk=:p$. Then one can easily calculate that the arrows
	\[\delta=pc\hspace{8mm}\alpha=pci\hspace{8mm}\beta=pch\hspace{8mm}\epsilon=pd\]
	fill the diagram as required.
\end{proof}

\begin{lem}\label{69343}
	Let $\calC$ be a category satisfying the calculus of fractions. Let $\bar{x}$ and $\bar{y}$ be two arrows
	$A\rightarrow C$. Assume that there are arrows $x,y\co A\rightarrow B$ and $a\co B\rightarrow C$ such that
	$xa=\bar{x}$ and $ya=\bar{y}$.
	\begin{displaymath}\xymatrix{
		 C & B\ar[l]^a & A\ar[r]_y\ar@/^10pt/[rr]^{\bar{y}}\ar[l]^x\ar@/_10pt/[ll]_{\bar{x}} & B\ar[r]_a & C
	}\end{displaymath}
	Then the span $C\xleftarrow{\bar{x}}A\xrightarrow{\bar{y}}C$ is null-homotopic if and only if
	the span $B\xleftarrow{x}A\xrightarrow{y}B$ is null-homotopic.
\end{lem}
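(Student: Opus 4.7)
The plan is to use the standard characterization, valid for any category satisfying the calculus of fractions, that a span $B\xleftarrow{f}A\xrightarrow{g}B$ is null-homotopic precisely when $f$ and $g$ become equal after a common post-composition, i.e.~there is an arrow $h\co B\to D$ with $fh=gh$. With this reformulation in hand, the equivalence in the lemma splits into two short manipulations: one utterly trivial, and one requiring just the square-filling (Ore) axiom of calculus of fractions.

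For the implication that null-homotopy of the $C$-span implies null-homotopy of the $B$-span, I would simply post-compose. If $\bar{x}c=\bar{y}c$ for some $c\co C\to E$, then substituting $\bar{x}=xa$ and $\bar{y}=ya$ gives $x(ac)=y(ac)$, so the arrow $ac\co B\to E$ witnesses the desired null-homotopy. No calculus-of-fractions axiom is needed here.

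For the reverse direction, suppose $xh=yh$ for some $h\co B\to D$. The goal is to produce $c\co C\to E$ with $\bar{x}c=\bar{y}c$, and the natural strategy is to find $c$ such that $ac$ factors through $h$. Applying the square-filling axiom to the span $D\xleftarrow{h}B\xrightarrow{a}C$ yields a commutative square $ac=hh'$ for some $c\co C\to E$ and $h'\co D\to E$. Then
\[\bar{x}c=xac=xhh'=yhh'=yac=\bar{y}c,\]
as required.

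The only subtle point is recognizing that merely square-filling suffices here and that the equalization half of calculus of fractions (exploited in Lemma \ref{25010}) plays no role. Once the characterization of null-homotopic spans as equalizable pairs is in place, both directions are essentially immediate, and I do not anticipate any serious obstacle.
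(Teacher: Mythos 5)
Your proof is built on the wrong characterization of homotopy of parallel arrows for this paper's one-sided calculus of fractions. You take a span $B\xleftarrow{x}A\xrightarrow{y}B$ to be null-homotopic iff there is an arrow $h\co B\to D$ with $xh=yh$, i.e., a post-composing coequalizer. But the convention in force here --- visible in the proof of Lemma \ref{25010}, where square fillings complete a cospan (common codomain) by a span (common domain), and the equalization axiom turns a post-composing coequalizer $fz=gz$ into a pre-composing equalizer $kf=kg$ --- gives the opposite characterization: $x$ and $y$ are homotopic iff there is $d\co D\to A$ with $dx=dy$. Your post-composition condition $xh=yh$ is certainly sufficient for null-homotopy (since $h$ becomes invertible in $\pi_1$), but it is not an equivalent characterization under the paper's axioms, and you never prove the missing implication.

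This error inverts the whole structure of the argument. Under the correct characterization, $x\sim y\Rightarrow\bar x\sim\bar y$ is the trivial direction: $dx=dy$ gives $d\bar x=(dx)a=(dy)a=d\bar y$ immediately, with no axiom required. The nontrivial direction is $\bar x\sim\bar y\Rightarrow x\sim y$: from $d\bar x=d\bar y$ one gets $(dx)a=(dy)a$, so $a$ coequalizes $dx$ and $dy$, and the \emph{equalization} axiom produces $e$ with $e(dx)=e(dy)$, whence $ed$ equalizes $x$ and $y$. You instead invoke a square-filling of the span $D\xleftarrow{h}B\xrightarrow{a}C$ into a cospan, which is the dual of the paper's square-filling axiom and not available here, and your closing remark that the equalization axiom ``plays no role'' is exactly backwards: it is the only axiom the proof needs, while square-filling is not used at all.
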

\begin{proof}
	First note that a span like $B\xleftarrow{x}A\xrightarrow{y}B$ is null-homotopic if and only if the
	parallel arrows $x$ and $y$ are homotopic. Since $\calC$ satisfies the calculus of fractions, this is the 
	case if and only if there is an equalizing arrow, i.e.~an arrow $d\co D\rightarrow A$ with $dx=dy$. Now if 
	$x$ and $y$ are homotopic then clearly also $\bar{x}$ and $\bar{y}$ are homotopic. On the other hand, assume that 
	$\bar{x}$ and $\bar{y}$ are homotopic and $d\co D\rightarrow A$ equalizes $\bar{x}$ and $\bar{y}$. Then we
	have
	\[(dx)a=d(xa)=d\bar{x}=d\bar{y}=d(ya)=(dy)a\]
	Then by the equalization property we find an arrow $e\co E\rightarrow D$ with $e(dx)=e(dy)$. Consequently, 
	the arrow $ed$ equalizes $x$ and $y$ and thus, $x$ and $y$ are homotopic.
\end{proof}

We now turn to the proof of Theorem \ref{20869}. In the following, let $\calO$ be a planar or symmetric 
operad satisfying the calculus of fractions with set of colors $C$ and let $\calS:=\calS(\calO)$.

\subsection{Marked objects} 

Let $c=(c_1,...,c_n)$ be an object of $\calS$, i.e.~$c_1,...,c_n$ are colors in $C$.
First we define a marking on $c$ in the symmetric case. It assigns to each coordinate of $c$ a 
symbol. A symbol can be assigned several times and not every coordinate has to be marked by a symbol. More precisely,
a marking of $c$ is a set $S$ of symbols together with a subset $I\subset\{1,...,n\}$ and a surjective function 
$f\co I\rightarrow S$. In the planar case, we additionally require the marking to be ordered. This means
that whenever $i\into f=j\into f$ for $i<j$ then also $i\into f=k\into f=j\into f$ for $i<k<j$.

Let $m_1,m_2$ be two markings of $c$ with symbol sets $S_1,S_2$. We say $m_1\subset m_2$ if there 
is a function $i\co S_1\rightarrow S_2$ and every coordinate which is marked with $s_1\in S_1$ is 
also marked with $s_1\into i\in S_2$. We say $m_1$ and $m_2$ are equivalent if $m_1\subset m_2$ and $m_2\subset m_1$.
This means that there is a bijection $i\co S_1\rightarrow S_2$ and a coordinate is marked with 
$s_1\in S_1$ if and only if it is marked with $s_1\into i\in S_2$. By slight abuse of notation, we identify equivalent
markings and write $m_1=m_2$ if they are equivalent. Then $\subset$ becomes a partial order on the set of markings
on $c$ (see the first paragraph of Subsection \ref{52176}).

\subsection{Marked arrows}

Let $\alpha\co c\rightarrow d$ be an arrow in $\calS$ with objects $c=(c_1,...,c_n)$ and
$d=(d_1,...,d_m)$. A marking on $\alpha$ is a marking on the domain $c$. A comarking on $\alpha$ is a marking
on the codomain $d$. A comarking on $\alpha$ induces a marking on $\alpha$: Let $(\sigma,X)$ be a representative of
$\alpha$ where $\sigma$ is either an identity or a colored permutation,
depending on whether $\calO$ is planar or symmetric. Write $X=(X_1,...,X_m)$.
The comarking yields a marking on the operations $X_i$: Mark each input of $X_i$
with the same symbol. Now push the markings through $\sigma$ to obtain a marking on the domain $c$.
Figure \ref{01354} illustrates this procedure. If $m$ is the comarking, then we denote this pull-backed
marking by $\alpha^*(m)$. Observe that this pull-back is functorial, i.e.~we have
\[(\alpha\beta)^*(m)=\alpha^*(\beta^*(m))\]
Furthermore, we have
\[m_1\subset m_2\ \ \Longleftrightarrow\ \ \alpha^*(m_1)\subset\alpha^*(m_2)\]

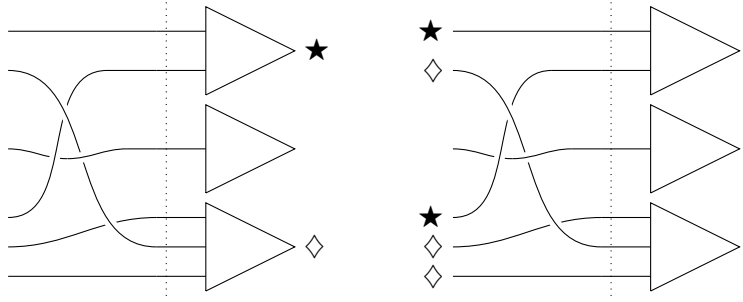
\begin{figure}[!ht]
\begin{center}\begin{tikzpicture}[scale=0.65]
	\draw (-1,5.4) to[out=left,in=right] (-4,5.4);
	\draw[white, line width=4pt] (-2.8,2.8) to[out=left,in=right] (-4,3);
	\draw (-2.8,2.8) to[out=left,in=right] (-4,3);
	\draw[white, line width=4pt] (-2,4.6) to[out=left,in=right] (-4,1.6);
	\draw (-2,4.6) to[out=left,in=right] (-4,1.6);
	\draw[white, line width=4pt] (-1,1.6) to[out=left,in=right] (-4,1);
	\draw (-1,1.6) to[out=left,in=right] (-4,1);
	\draw[white, line width=4pt] (-1,1) to[out=left,in=right] (-4,4.6);
	\draw (-1,1) to[out=left,in=right] (-4,4.6);
	\draw[white, line width=4pt] (-1.6,3) to[out=left,in=right] (-2.8,2.8);
	\draw (-1.6,3) to[out=left,in=right] (-2.8,2.8);
	\draw (-1,0.4) to[out=left,in=right] (-4,0.4);
	\draw (-2,4.6) -- (-1,4.6);
	\draw (-1.6,3) -- (-1,3);
	
	\draw (0,4.6) -- +(-1,0);
	\draw (0,5.4) -- +(-1,0);
	\draw (0,4.1) -- +(0,1.8) -- +(1.8,0.9) node[right]{$\bigstar$} -- +(0,0);
	\draw (0,3) -- +(-1,0);
	\draw (0,2.1) -- +(0,1.8) -- +(1.8,0.9) -- +(0,0);
	\draw (0,0.4) -- +(-1,0);
	\draw (0,1) -- +(-1,0);
	\draw (0,1.6) -- +(-1,0);
	\draw (0,0.1) -- +(0,1.8) -- +(1.8,0.9) node[right]{$\diamondsuit$} -- +(0,0);
	\draw[dotted] (-0.8,0) -- +(0,6);
	
	\draw (8,5.4) to[out=left,in=right] (5,5.4) node[left]{$\bigstar$};
	\draw[white, line width=4pt] (6.2,2.8) to[out=left,in=right] (5,3);
	\draw (6.2,2.8) to[out=left,in=right] (5,3);
	\draw[white, line width=4pt] (7,4.6) to[out=left,in=right] (5,1.6);
	\draw (7,4.6) to[out=left,in=right] (5,1.6) node[left]{$\bigstar$};
	\draw[white, line width=4pt] (8,1.6) to[out=left,in=right] (5,1);
	\draw (8,1.6) to[out=left,in=right] (5,1) node[left]{$\diamondsuit$};
	\draw[white, line width=4pt] (8,1) to[out=left,in=right] (5,4.6);
	\draw (8,1) to[out=left,in=right] (5,4.6) node[left]{$\diamondsuit$};
	\draw[white, line width=4pt] (7.4,3) to[out=left,in=right] (6.2,2.8);
	\draw (7.4,3) to[out=left,in=right] (6.2,2.8);
	\draw (8,0.4) to[out=left,in=right] (5,0.4) node[left]{$\diamondsuit$};
	\draw (7,4.6) -- (8,4.6);
	\draw (7.4,3) -- (8,3);
	
	\draw (9,4.6) -- +(-1,0);
	\draw (9,5.4) -- +(-1,0);
	\draw (9,4.1) -- +(0,1.8) -- +(1.8,0.9) -- +(0,0);
	\draw (9,3) -- +(-1,0);
	\draw (9,2.1) -- +(0,1.8) -- +(1.8,0.9) -- +(0,0);
	\draw (9,0.4) -- +(-1,0);
	\draw (9,1) -- +(-1,0);
	\draw (9,1.6) -- +(-1,0);
	\draw (9,0.1) -- +(0,1.8) -- +(1.8,0.9) -- +(0,0);
	\draw[dotted] (8.2,0) -- +(0,6);
\end{tikzpicture}\end{center}
\caption{A comarking (left) and the pull-backed marking (right).}\label{01354}
\end{figure}

\vspace{2mm}
\begin{center}
	\framebox[1.1\width]{Now fix an object $x$ in $\calS$.}
\end{center}
\vspace{2mm}

Let $(\alpha_1,m_1)$ and $(\alpha_2,m_2)$ be two marked arrows with codomain $x$, i.e.~$\alpha_i\co c_i\rightarrow x$ is 
an arrow and $m_i$ is a marking on $c_i$. We write
\[(\alpha_1,m_1)\subset(\alpha_2,m_2)\]
if there is a square filling
\begin{displaymath}\xymatrix{
	d\ar@{..>}[r]^{\beta_2}\ar@{..>}[d]_{\beta_1}&c_2\ar[d]^{\alpha_2}\\
	c_1\ar[r]_{\alpha_1}&x
}\end{displaymath}
with $\beta_1^*(m_1)\subset\beta_2^*(m_2)$. Observe that then {\it every} square filling satisfies this:
Let $(\gamma_1,\gamma_2)$ be another square filling of $(\alpha_1,\alpha_2)$. Then choose a
common square filling $(\delta_1,\delta_2)$ as in Lemma \ref{25010}. It is not hard to see that the property
$\delta_1^*(m_1)\subset\delta_2^*(m_2)$ is inherited from the square filling $(\beta_1,\beta_2)$. On the other
hand, this forces the property onto the square filling $(\gamma_1,\gamma_2)$, i.e.~we have $\gamma_1^*(m_1)\subset
\gamma_2^*(m_2)$.
\begin{rem}\label{00176}
	This observation implies also the following: Let $(\alpha_1,m_1)\subset(\alpha_2,m_2)$
	and assume that $\alpha_1=\alpha_2$. Then we necessarily have $m_1\subset m_2$. Indeed, we can
	choose $\beta_1=\id=\beta_2$ in the above square filling.
\end{rem}

\begin{prop}\label{53020}
	The relation $\subset$ on the set of marked arrows is reflexive and transitive.
\end{prop}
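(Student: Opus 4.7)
The plan is to verify the two conditions directly from the definitions, exploiting (i) the functoriality of the pull-back, i.e.\ $(\alpha\beta)^*(m)=\alpha^*(\beta^*(m))$, (ii) the monotonicity $m_1\subset m_2\Leftrightarrow\alpha^*(m_1)\subset\alpha^*(m_2)$, (iii) the fact that $\subset$ is a partial order on markings of a fixed object (so in particular reflexive and transitive), and (iv) the freedom to choose \emph{any} convenient square filling, granted by the observation preceding Remark \ref{00176}.

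For reflexivity, given a marked arrow $(\alpha,m)$ with $\alpha\colon c\to x$, I would simply take the trivial square filling $\beta_1=\beta_2=\id_c$ of $(\alpha,\alpha)$. Then $\beta_1^*(m)=m\subset m=\beta_2^*(m)$, where the middle $\subset$ is just reflexivity of $\subset$ on markings of $c$.

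For transitivity, suppose $(\alpha_1,m_1)\subset(\alpha_2,m_2)$ and $(\alpha_2,m_2)\subset(\alpha_3,m_3)$, witnessed by square fillings
\begin{displaymath}\xymatrix@-5pt{
  d\ar[r]^{\beta_2}\ar[d]_{\beta_1} & c_2\ar[d]^{\alpha_2} & & e\ar[r]^{\gamma_3}\ar[d]_{\gamma_2} & c_3\ar[d]^{\alpha_3}\\
  c_1\ar[r]_{\alpha_1} & x & & c_2\ar[r]_{\alpha_2} & x
}\end{displaymath}
with $\beta_1^*(m_1)\subset\beta_2^*(m_2)$ and $\gamma_2^*(m_2)\subset\gamma_3^*(m_3)$. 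I would produce a common square filling of the span $d\xrightarrow{\beta_2} c_2 \xleftarrow{\gamma_2} e$ using the calculus of fractions, obtaining $f$ with arrows $\delta_d\colon f\to d$ and $\delta_e\colon f\to e$ satisfying $\delta_d\beta_2=\delta_e\gamma_2$. Setting $\phi_1:=\delta_d\beta_1\colon f\to c_1$ and $\phi_3:=\delta_e\gamma_3\colon f\to c_3$, a short calculation
\[\phi_1\alpha_1=\delta_d\beta_1\alpha_1=\delta_d\beta_2\alpha_2=\delta_e\gamma_2\alpha_2=\delta_e\gamma_3\alpha_3=\phi_3\alpha_3\]
shows that $(\phi_1,\phi_3)$ is a square filling of $(\alpha_1,\alpha_3)$.

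It remains to check $\phi_1^*(m_1)\subset\phi_3^*(m_3)$. Pulling $\beta_1^*(m_1)\subset\beta_2^*(m_2)$ back along $\delta_d$ and using functoriality gives $\phi_1^*(m_1)\subset(\delta_d\beta_2)^*(m_2)$; pulling $\gamma_2^*(m_2)\subset\gamma_3^*(m_3)$ back along $\delta_e$ gives $(\delta_e\gamma_2)^*(m_2)\subset\phi_3^*(m_3)$. Since $\delta_d\beta_2=\delta_e\gamma_2$, the middle terms coincide, and transitivity of $\subset$ on markings of $f$ yields $\phi_1^*(m_1)\subset\phi_3^*(m_3)$. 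I do not foresee a real obstacle here; the only point that deserves care is the bookkeeping of functoriality and the fact that I am free to invoke calculus of fractions to produce the required square filling of the intermediate span.
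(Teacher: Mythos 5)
Your proof is correct and follows essentially the same route as the paper: combine the two witnessing square fillings via a further square filling of the cospan into the middle object, then chain functoriality of pull-back with monotonicity of $\subset$ on markings to transport the two inclusions to a common domain. The only cosmetic difference is your explicit treatment of reflexivity via the identity square filling, which the paper simply declares ``clear''.
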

\begin{proof}
	Reflexivity is clear. For transitivity assume $(\alpha_1,m_1)\subset(\delta,m)$ and
	$(\delta,m)\subset(\alpha_2,m_2)$. Choose two square fillings
	\begin{displaymath}\xymatrix{
		a_1\ar@{..>}[d]_{\beta_1}\ar@{..>}[r]^{\delta_1}&d\ar[d]^{\delta}&
		a_2\ar@{..>}[l]_{\delta_2}\ar@{..>}[d]^{\beta_2}\\
		c_1\ar[r]_{\alpha_1}&x&c_2\ar[l]^{\alpha_2}
	}\end{displaymath}
	with $\beta_1^*(m_1)\subset\delta_1^*(m)$ and $\delta_2^*(m)\subset\beta_2^*(m_2)$. Choose a square filling of
	$(\delta_1,\delta_2)$
	\begin{displaymath}\xymatrix{
		&e\ar[dl]_{\gamma_1}\ar[d]^{\eta}\ar[dr]^{\gamma_2}&\\
		a_1\ar@{..>}[d]_{\beta_1}\ar@{..>}[r]_{\delta_1}&d\ar[d]^{\delta}&
		a_2\ar@{..>}[l]^{\delta_2}\ar@{..>}[d]^{\beta_2}\\
		c_1\ar[r]_{\alpha_1}&x&c_2\ar[l]^{\alpha_2}
	}\end{displaymath}
	Now we have
	\begin{eqnarray*}
		(\gamma_1\beta_1)^*(m_1)&=&\gamma_1^*(\beta_1^*(m_1))\\
								&\subset &\gamma_1^*(\delta_1^*(m))\\
								&=&(\gamma_1\delta_1)^*(m)\\
								&=&\eta^*(m)\\
								&=&(\gamma_2\delta_2)^*(m)\\
								&=&\gamma_2^*(\delta_2^*(m))\\
								&\subset &\gamma_2^*(\beta_2^*(m_2))\\
								&=&(\gamma_2\beta_2)^*(m_2)
	\end{eqnarray*}
	This proves $(\alpha_1,m_1)\subset(\alpha_2,m_2)$.
\end{proof}

\subsection{Balls and partitions}\label{52176}

A transitive and reflexive relation $\preccurlyeq$ on a set $Z$ is not a poset in general since $a\preccurlyeq b$ together with 
$b\preccurlyeq a$ does not imply $a=b$ in general. We can repair this in the following way: Define $a,b\in Z$ to be equivalent if 
$a\preccurlyeq b$ and $b\preccurlyeq a$. This is indeed an equivalence relation because $\preccurlyeq$ is assumed to be reflexive 
and transitive. Now if $\mathfrak{a}$ and $\mathfrak{b}$ are two equivalence classes, we write $\mathfrak{a}\leq\mathfrak{b}$
if there are representatives $a$ and $b$ respectively with $a\preccurlyeq b$. One can easily show that then \emph{any} two
representatives satisfy this. Using this, it is not hard to see that $\leq$ is indeed a partial order on the set of equivalence
classes. In particular, we have $\mathfrak{a}=\mathfrak{b}$ if and only if $\mathfrak{a}\leq\mathfrak{b}$ and 
$\mathfrak{b}\leq\mathfrak{a}$.

\vspace{2mm}

We want to apply this observation to the reflexive and transitive relation $\subset$ on the set of marked arrows.
We say that two marked arrows $(\alpha_1,m_1)$ and $(\alpha_2,m_2)$ with common codomain $x$ are equivalent if both 
$(\alpha_1,m_1)\subset(\alpha_2,m_2)$ and $(\alpha_2,m_2)\subset(\alpha_1,m_1)$ hold. We remark that this
is equivalent to the existence of a square filling
\begin{displaymath}\xymatrix{
	d\ar@{..>}[r]^{\beta_2}\ar@{..>}[d]_{\beta_1}&c_2\ar[d]^{\alpha_2}\\
	c_1\ar[r]_{\alpha_1}&x
}\end{displaymath}
with $\beta_1^*(m_1)=\beta_2^*(m_2)$ and moreover that {\it every} square filling satisfies this.
\begin{itemize}
	\item A {\it semi-partition} is an equivalence class of marked arrows.
	\item A {\it partition} is a semi-partition with fully marked domain for some (and therefore for every)
		representative of the semi-partition. Here, an object in $\calS$ is fully marked if every coordinate 
		is marked.
	\item A {\it multiball} is a semi-partition with an uni-marked domain for some (and therefore for every)
		representative of the semi-partition. Here, an object in $\calS$ is uni-marked if there is only one
		symbol in the marking.
	\item A {\it ball} is a semi-partition such that there is a single-marked representative. Here, an object in $\calS$ is 
		single-marked if only one coordinate is marked.
\end{itemize}
Note that these definitions depend on the base point $x$. Following the remarks in the first paragraph, we write
$\calP\subset\calQ$ for two semi-partitions $\calP$ and $\calQ$ if there are representatives $p$ of 
$\calP$ and $q$ of $\calQ$ satisfying $p\subset q$. Then for \emph{all} such representatives $p,q$
we have $p\subset q$. It follows that $\subset$ is a partial order on the set of semi-partitions. In particular,
we have $\calP=\calQ$ if and only if $\calP\subset\calQ$ and $\calQ\subset\calP$.

\vspace{2mm}

We now investigate the relationship between semi-partitions and multiballs. Let $\calP$ be a semi-partition with
representative $(\alpha,m)$. Picking out a symbol $s$ of $m$ and removing all markings except those with the chosen
symbol $s$ gives a uni-marked arrow $(\alpha,m^s)$. The corresponding equivalence class is a multiball and is
independent of the chosen representative $(\alpha,m)$ in the following sense: If we choose another representative
$(\beta,n)$, then $(\alpha,m)\sim(\beta,n)$ and to the chosen symbol $s$ of $m$ corresponds a unique symbol
$r$ of $n$. Deleting all markings in $n$ except those with the symbol $r$ gives a uni-marked arrow 
$(\beta,n^r)$ which is equivalent to $(\alpha,m^s)$. Multiballs arising in this way are called submultiballs of 
$\calP$ and we write $P\in\calP$ for submultiballs. Note that Remark \ref{00176} implies that two submultiballs 
$P_1,P_2$ coming from a representative of $\calP$ by choosing two different symbols satisfy $P_1\not\subset P_2$ and
$P_2\not\subset P_1$, in particular $P_1\neq P_2$. It follows that there is a canonical bijection between the set 
$\{P\in\calP\}$ of submultiballs of $\calP$ and the set of symbols of $\calP$ (which is, by definition, the set of
symbols of the marking of any representative for $\calP$). Moreover, any two submultiballs $P_1,P_2\in\calP$ with 
$P_1\neq P_2$ satisfy the stronger property $(P_1\not\subset P_2)\wedge(P_2\not\subset P_1)$. Equivalently, whenever
$P_1\subset P_2$ or $P_2\subset P_1$, then already $P_1=P_2$.

\begin{prop}\label{45613}
	Let $\calP,\calQ$ be semi-partitions, then 
	\[\calQ\subset\calP\ \ \Longleftrightarrow\ \ \forall_{Q\in\calQ}\ \exists_{P\in\calP}\ Q\subset P\]
	In particular, $\calP=\calQ$ if and only if $\{Q\in\calQ\}=\{P\in\calP\}$.
\end{prop}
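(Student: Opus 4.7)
My strategy is to fix representatives and translate the containment of semi-partitions into a single square filling whose pullback markings verify a symbol-function condition, then split or assemble that condition along individual symbols to match the submultiball containments. Concretely, fix representatives $(\alpha_Q,m_Q)$ of $\calQ$ and $(\alpha_P,m_P)$ of $\calP$, with symbol sets $S_Q,S_P$. I rely on two facts already established in the text: first, the canonical bijection between $S_Q$ and the submultiballs $Q\in\calQ$, obtained by restricting $m_Q$ to its single symbols (and analogously for $\calP$); second, the witness-independence property stated just before Remark~\ref{00176}, namely that once $(\alpha_1,m_1)\subset(\alpha_2,m_2)$ holds, \emph{every} square filling of $(\alpha_1,\alpha_2)$ witnesses it.

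For $(\Rightarrow)$, suppose $\calQ\subset\calP$. Choose any square filling $(\beta_Q,\beta_P)$ of $(\alpha_Q,\alpha_P)$; by witness-independence it realizes $\beta_Q^*(m_Q)\subset\beta_P^*(m_P)$, which by definition is a function $i\co S_Q\rightarrow S_P$ such that every coordinate marked with $s$ on the left is marked with $i(s)$ on the right. For a submultiball $Q\in\calQ$ coming from a symbol $s\in S_Q$, the pullback $\beta_Q^*(m_Q^s)$ consists precisely of the $s$-marked coordinates of $\beta_Q^*(m_Q)$, each of which is $i(s)$-marked in $\beta_P^*(m_P)$; hence $\beta_Q^*(m_Q^s)\subset\beta_P^*(m_P^{i(s)})$, so the submultiball $P\in\calP$ corresponding to $i(s)$ satisfies $Q\subset P$.

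For $(\Leftarrow)$, assume for every $s\in S_Q$ there is $t_s\in S_P$ with $Q_s\subset P_{t_s}$, choosing one such $t_s$ per $s$. Fix a \emph{single} square filling $(\beta_Q,\beta_P)$ of $(\alpha_Q,\alpha_P)$, which exists since $\calS$ satisfies the calculus of fractions. By the witness-independence invoked above, this one square filling witnesses each individual containment $(\alpha_Q,m_Q^s)\subset(\alpha_P,m_P^{t_s})$ simultaneously, i.e.\ $\beta_Q^*(m_Q^s)\subset\beta_P^*(m_P^{t_s})$ for every $s\in S_Q$. Assembling these coordinate-wise (using that the $s$-marked coordinates of $\beta_Q^*(m_Q)$ are exactly those of $\beta_Q^*(m_Q^s)$) yields $\beta_Q^*(m_Q)\subset\beta_P^*(m_P)$ via the function $s\mapsto t_s$, proving $\calQ\subset\calP$.

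The ``in particular'' corollary then follows directly: if $\calP=\calQ$, equivalent representatives give equal submultiball sets; conversely, if the submultiball sets coincide, every $Q\in\calQ$ equals some $P\in\calP$, trivially giving $Q\subset P$, and the main equivalence yields both $\calQ\subset\calP$ and $\calP\subset\calQ$, hence equality. The main subtle point I anticipate is not a computation but rather the conceptual step in $(\Leftarrow)$: emphasizing that witness-independence obviates any fresh application of Lemma~\ref{25010}, so one uniform square filling detects \emph{all} the per-symbol containments at once, and the symbol function $s\mapsto t_s$ is merely a bookkeeping choice made pointwise on $S_Q$ (with no need for extra compatibility in the planar case, since ordered markings pull back to ordered markings).
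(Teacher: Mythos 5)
Your proof is correct and follows essentially the same line as the paper's: work with one square filling (whose pulled-back markings encode everything) and pass between containments of semi-partitions and containments of markings via the witness-independence observation, which is exactly the paper's use of Remark~\ref{00176}. The only, harmless, deviation is the forward direction of the ``in particular'' corollary, where you invoke representative-independence of the submultiball set directly rather than re-deriving it from the first statement via the composite maps $i$ and $j$ as the paper does.
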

\begin{proof}
	We first prove the last statement since it is a formal consequence of the previous statement and the remarks preceding 
	the proposition. First recall that $\calP=\calQ$ is equivalent to $\calP\subset\calQ$ and $\calQ\subset\calP$.
	The first statement of the proposition says
	that there is a function $i\co \{Q\in\calQ\}\rightarrow\{P\in\calP\}$ with the property that $Q\subset Q\into i$ 
	for each $Q\in\calQ$. Since we also have $\calP\subset\calQ$, there is another function
	$j\co \{P\in\calP\}\rightarrow\{Q\in\calQ\}$ with the property that $P\subset P\into j$ for each $P\in\calP$. We have
	\[Q\subset Q\into i\subset (Q\into i)\into j=Q\into(ij)\]
	for all $Q\in\calQ$. Since both the left and right side are submultiballs of $\calQ$, the remarks preceding
	the proposition imply $Q=Q\into(ij)$ for all $Q\in\calQ$. We then have
	\[Q\subset Q\into i\subset Q\]
	and therefore also $Q=Q\into i$ for all $Q\in\calQ$. This shows $\{Q\in\calQ\}\subset\{P\in\calP\}$. With a similar
	argument applied to $ji$, we also obtain $\{Q\in\calQ\}\supset\{P\in\calP\}$. So we have indeed 
	$\{Q\in\calQ\}=\{P\in\calP\}$. The converse implication also follows easily from the first 
	statement of this proposition.
	
	Now let's turn to the first statement: Assume $\calQ\subset\calP$. By the square filling technique, we know
	that we can choose a common arrow $\alpha\co c\rightarrow x$ with markings $m_\calQ\subset m_\calP$ such that
	$[\alpha,m_\calQ]=\calQ$ and $[\alpha,m_\calP]=\calP$. If $Q\in\calQ$, then we find a symbol $s_\calQ$ of the 
	marking $m_\calQ$ which corresponds to $Q$. But since $m_\calQ\subset m_\calP$, there is a unique symbol $s_\calP$
	of the marking $m_\calP$ such that the coordinates of $c$ marked by $s_\calQ$ are also marked by $s_\calP$.
	The submultiball obtained from $(\alpha,m_\calP)$ corresponding to the symbol $s_\calP$ is the one we are looking
	for.
	
	Conversely, assume that there is a function $i\co \{Q\in\calQ\}\rightarrow\{P\in\calP\}$ such that
	$Q\subset Q\into i$ for every $Q\in\calQ$. Using the square filling technique, we find a common arrow
	$\alpha\co c\rightarrow x$ with markings $m_\calQ,m_\calP$ such that $[\alpha,m_\calQ]=\calQ$ and 
	$[\alpha,m_\calP]=\calP$. We want to show $m_\calQ\subset m_\calP$. Let $s$ be any symbol of
	$m_\calQ$. To this symbol corresponds exactly one submultiball $Q\in\calQ$ such that $Q=[\alpha,m_\calQ^s]$
	where $m_\calQ^s$ is the submarking of $m_\calQ$ with all markings removed except those with the symbol $s$.
	To the submultiball $Q\into i\in\calP$ corresponds exactly one symbol $r$ of $m_\calP$ such that
	$Q\into i=[\alpha,m_\calP^r]$. Since $Q\subset Q\into i$ we have 
	$(\alpha,m_\calQ^s)\subset(\alpha,m_\calP^r)$ and therefore $m_\calQ^s\subset m_\calP^r$ by Remark
	\ref{00176}. It follows $m_\calQ\subset m_\calP$ and thus $\calQ\subset\calP$.
\end{proof}

\subsection{The action on the set of semi-partitions}

Here we will define an action of $\Gamma=\pi_1(\calS,x)$ on the set of semi-partitions over $x$. 
So let $\gamma\in\Gamma$
and $\calP$ be a semi-partition over $x$. We will define another semi-partition $\gamma\cdot\calP$ over $x$.
Recall that $\gamma$ is represented by a span $x\xleftarrow{\gamma_d}a\xrightarrow{\gamma_n}x$ (the $d$
refers to \emph{denominator} and the $n$ refers to \emph{nominator}) and that $\calP$ is represented by a marked arrow
$(\alpha\co c\rightarrow x,m)$. First choose a square filling of $(\gamma_n,\alpha)$
\begin{displaymath}\xymatrix{
	x&&a\ar[ll]_{\gamma_d}\ar[rr]^{\gamma_n}&&x&&c\ar[ll]_\alpha\\
	&&&&&&\\
	&&&&b\ar@{-->}[uullll]^{\delta}\ar@{..>}[uull]_{\beta_1}\ar@{..>}[uurr]^{\beta_2}&&
}\end{displaymath}
and then define $\delta:=\beta_1\gamma_d\co b\rightarrow x$. Endow this arrow with the marking $\mu:=\beta_2^*(m)$.
Finally, define $\gamma\cdot\calP:=[\delta,\mu]$. We have to show that this is well-defined, i.e.~we have to show 
that the resulting class is independent of
\begin{itemize}
	\item[\bf{1.}] the square filling $(\beta_1,\beta_2)$
	\item[\bf{2.}] the marked arrow $(\alpha,m)$ as a representative of $\calP$
	\item[\bf{3.}] the span $(\gamma_d,\gamma_n)$ as a representative of $\gamma$
\end{itemize}
We now prove these points one by one.

{\bf 1.} Assume we have two square fillings of $(\gamma_n,\alpha)$ as in the following diagram:
\begin{displaymath}\xymatrix{
	&&&&b'\ar@{..>}[dll]_{\beta'_1}\ar@{..>}[drr]^{\beta'_2}&&\\
	x&&a\ar[ll]_{\gamma_d}\ar[rr]^{\gamma_n}&&x&&c\ar[ll]_\alpha\\
	&&&&b\ar@{..>}[ull]^{\beta_1}\ar@{..>}[urr]_{\beta_2}&&
}\end{displaymath}
Choose a common square filling as in Lemma \ref{25010}:
\begin{displaymath}\xymatrix{
	&&&e\ar@{-->}[dr]^{\eta'}\ar@{-->}[dddr]^{\eta}\ar@{-->}@/_10pt/[ddl]_{\delta_1}
	\ar@{-->}@/^20pt/[ddrrr]^{\delta_2}&&&\\
	&&&&b'\ar@{..>}[dll]_{\beta'_1}\ar@{..>}[drr]^{\beta'_2}&&\\
	x&&a\ar[ll]_{\gamma_d}\ar[rr]^{\gamma_n}&&x&&c\ar[ll]_\alpha\\
	&&&&b\ar@{..>}[ull]^{\beta_1}\ar@{..>}[urr]_{\beta_2}&&
}\end{displaymath}
Now  the marked arrow $(\beta_1\gamma_d,\beta_2^*(m))$ is equivalent to the marked arrow
$(\delta_1\gamma_d,\delta_2^*(m))$ via $\eta$. Analogously, the marked arrow $(\beta'_1\gamma_d,{\beta'_2}^*(m))$ is
equivalent to $(\delta_1\gamma_d,\delta_2^*(m))$ via $\eta'$ and therefore equivalent to 
$(\beta_1\gamma_d,\beta_2^*(m))$, q.e.d.

{\bf 2.} Let $(\alpha',m')$ be another marked arrow equivalent to $(\alpha,m)$ and choose a square filling
$(\beta,\beta')$ such that $\beta^*(m)={\beta'}^*(m')=:\mu$ as in the following diagram:
\begin{displaymath}\xymatrix{
	&&&&&&c\ar[dll]_{\alpha}&&\\
	x&&a\ar[ll]_{\gamma_d}\ar[rr]^{\gamma_n}&&x&&&&e\ar@{..>}[ull]_{\beta}\ar@{..>}[dll]^{\beta'}
	\ar@{..>}[llll]_{\delta}\\
	&&&&&&c'\ar[ull]^{\alpha'}&&
}\end{displaymath}
First choose a square filling $(\eta_1,\eta_2)$ of $(\gamma_n,\alpha)$ and then a square filling $(\nu_1,\nu_2)$
of $(\eta_2,\beta)$. Analogously, choose a square filling $(\eta'_1,\eta'_2)$ of $(\gamma_n,\alpha')$ and then a 
square filling $(\nu'_1,\nu'_2)$ of $(\eta'_2,\beta')$
\begin{displaymath}\xymatrix{
	&&&&&&z\ar@{-->}@/^10pt/[ddrr]^{\nu_2}\ar@{-->}[dll]_{\nu_1}&&\\
	&&&&y\ar@{-->}[dll]_{\eta_1}\ar@{-->}[rr]^{\eta_2}&&c\ar[dll]_{\alpha}&&\\
	x&&a\ar[ll]_{\gamma_d}\ar[rr]^{\gamma_n}&&x&&&&e\ar@{..>}[ull]_{\beta}\ar@{..>}[dll]^{\beta'}
	\ar@{..>}[llll]_{\delta}\\
	&&&&y'\ar@{-->}[ull]^{\eta'_1}\ar@{-->}[rr]_{\eta'_2}&&c'\ar[ull]^{\alpha'}&&\\
	&&&&&&z'\ar@{-->}@/_10pt/[uurr]_{\nu'_2}\ar@{-->}[ull]^{\nu'_1}&&
}\end{displaymath}
The marked arrow $(\eta_1\gamma_d,\eta_2^*(m))$ is equivalent to $\Lambda:=(\nu_1\eta_1\gamma_d,\nu_2^*(\mu))$ 
via $\nu_1$. On the other side, the marked arrow $(\eta'_1\gamma_d,{\eta'_2}^*(m'))$ is equivalent to
$\Lambda':=(\nu'_1\eta'_1\gamma_d,{\nu'_2}^*(\mu))$ via $\nu'_1$. The marked arrows $\Lambda$ and $\Lambda'$ are both
constructed from the same marked arrow $(\delta,\mu)$ and so are equivalent by {\bf 1}. Consequently,
$(\eta_1\gamma_d,\eta_2^*(m))$ and $(\eta'_1\gamma_d,{\eta'_2}^*(m'))$ are equivalent, q.e.d.

{\bf 3.} Let $(\gamma'_d,\gamma'_n)$ be another representing span of $\gamma$ homotopic to the span
$(\gamma_d,\gamma_n)$. Then recall that the two spans can be filled by a diagram as follows:
\begin{displaymath}\xymatrix{
	&&a\ar[dll]_{\gamma_d}\ar[drr]^{\gamma_n}&&&&\\
	x&&e\ar@{..>}[u]_{\eta}\ar@{..>}[d]^{\eta'}\ar@{..>}[rr]^{\delta_n}\ar@{..>}[ll]_{\delta_d}&&x&&c\ar[ll]_\alpha\\
	&&a'\ar[ull]^{\gamma'_d}\ar[urr]_{\gamma'_n}&&&&
}\end{displaymath}
Now choose a square filling $(\nu_1,\nu_2)$ of $(\delta_n,\alpha)$ and note that $(\epsilon,\nu_2)$, 
where $\epsilon:=\nu_1\eta$, gives a square filling of $(\gamma_n,\alpha)$.
\begin{displaymath}\xymatrix{
	&&&&z\ar@{-->}[ddll]^/-8pt/{\nu_1}\ar@{-->}[ddrr]_/-8pt/{\nu_2}\ar[dll]_\epsilon&&\\
	&&a\ar[dll]_{\gamma_d}\ar[drr]^{\gamma_n}&&&&\\
	x&&e\ar@{..>}[u]_{\eta}\ar@{..>}[d]^{\eta'}\ar@{..>}[rr]^{\delta_n}\ar@{..>}[ll]_{\delta_d}&&x&&c\ar[ll]_\alpha\\
	&&a'\ar[ull]^{\gamma'_d}\ar[urr]_{\gamma'_n}&&&&
}\end{displaymath}
The marked arrow $(\epsilon\gamma_d,\nu_2^*(m))$ is equivalent to $(\nu_1\delta_d,\nu_2^*(m))$. Similarly, 
define $\epsilon'=\nu_1\eta'$ and note that $(\epsilon',\nu_2)$ gives a square filling of $(\gamma'_n,\alpha)$.
Again, the marked arrow $(\epsilon'\gamma'_d,\nu_2^*(m))$ is equivalent to $(\nu_1\delta_d,\nu_2^*(m))$. Therefore,
$(\epsilon\gamma_d,\nu_2^*(m))$ and $(\epsilon'\gamma'_d,\nu_2^*(m))$ are equivalent, q.e.d.

\vspace{2mm}

Now we want to show that this is indeed an action, i.e.~$1\cdot\calP=\calP$ and
$\gamma^1\cdot(\gamma^2\cdot\calP)=(\gamma^1\gamma^2)\cdot\calP$. The first property is easy to see. The
second property is not entirely trivial but straightforward. We will be explicit for completeness. 
Choose two representing spans $(\gamma^1_d,\gamma^1_n)$ and $(\gamma^2_d,\gamma^2_n)$ for $\gamma^1$ and
$\gamma^2$ respectively. Let $(\alpha,m)$ represent $\calP$. To get a representing span for the composition
$\gamma_1\gamma_2$, choose a square filling $(\beta_1,\beta_2)$ of $(\gamma^1_n,\gamma^2_d)$ and take
the span $(\beta_1\gamma^1_d,\beta_2\gamma^2_n)$. This span acts on $(\alpha,m)$ as before and is sketched
diagrammatically as follows:
\begin{displaymath}\xymatrix{
	&&&&&&&&z\ar[dllll]_{\eta}\ar[ddrr]^{\nu}&&\\
	&&&&y\ar[dll]^{\beta_1}\ar[drr]_{\beta_2}\ar[dllll]_{\delta_1}\ar[drrrr]^{\delta_2}&&&&&&\\
	x&&a^1\ar[ll]^{\gamma^1_d}\ar[rr]_{\gamma^1_n}&&x&&a^2\ar[ll]^{\gamma^2_d}\ar[rr]_{\gamma^2_n}&&x&&
	c\ar[ll]^\alpha
}\end{displaymath}
So a representative of $(\gamma^1\gamma^2)\cdot\calP$ is given by $(\eta\delta_1,\nu^*(m))$. Now a representative
for $\gamma^2\cdot\calP$ is given by $(\eta\beta_2\gamma^2_d,\nu^*(m))$ because $(\eta\beta_2,\nu)$ is a square
filling for $(\gamma^2_n,\alpha)$. Since $(\eta\beta_1,\id_z)$ is a square filling for
$(\gamma^1_n,\eta\beta_2\gamma^2_d)$, we obtain that $(\eta\beta_1\gamma^1_d,\id_z^*(\nu^*(m)))$ is
a representative of $\gamma^1\cdot(\gamma^2\cdot\calP)$. But this last marked arrow is equal to
$(\eta\delta_1,\nu^*(m))$, q.e.d.

\begin{rem}\label{12397}
	It is not hard to see that $\calP\subset\calQ$ implies $\gamma\cdot\calP\subset\gamma\cdot\calQ$.
\end{rem}

\begin{rem}\label{72125}
	The submultiballs of $\gamma\cdot\calP$ are the multiballs $\gamma\cdot P$ with $P\in\calP$.
\end{rem}

\subsection{Pointwise stabilizers of partitions}

Let $\calP$ be a partition over $x$. By the pointwise stabilizer of $\calP$ we mean
the subgroup
\[\Lambda:=\{\gamma\in\pi_1(\calS,x)\mid\gamma\cdot P=P\text{ for all }P\in\calP\}\]
Fix some representative $(\alpha,m)$ of $\calP$. We can assume without loss of generality that the marking
$m$ on the domain $c$ of $\alpha$ is ordered. That means that if $f\co I\rightarrow S$ is the marking function
of $m$ and whenever $i\into f=j\into f$ for $i<j$, then also $i\into f=k\into f=j\into f$ for every $k$ with
$i<k<j$. This is true in the planar case by definition. In the symmteric case, we can choose a 
colored permutation $\sigma\in\mathfrak{Sym}(C)$ with $\sigma^*(m)$ ordered and replace $(\alpha,m)$ by the equivalent 
marked arrow $(\sigma\alpha,\sigma^*(m))$.

\begin{prop}\label{88520}
	Each symbol of the marking $m$ determines a subword of the word $c=\op{dom}(\alpha)$. Denote
	these subwords by $c_1,...,c_k$ and order them such that $c=c_1\otimes...\otimes c_k$.
	Then we have a well-defined isomorphism of groups
	\[\Xi\co \pi_1(\calS,c_1)\times...\times\pi_1(\calS,c_k)\rightarrow\Lambda\]
	which is given by applying the tensor product of paths and then conjugating with the arrow $\alpha$. More
	explicitly, it is given by sending representing spans $p_1,...,p_k$ to the homotopy class represented by the path
	\begin{displaymath}\begin{array}{ccccccccc}
		&&c_1&\xleftarrow{p_1^\prec}&a_1&\xrightarrow{p_1^\succ}&c_1&&\\
		&&\otimes &\otimes &\otimes &\otimes &\otimes &&\\
		x&\xleftarrow{\hspace{5mm}\alpha\hspace{5mm}}{}&\vdots &\vdots &\vdots &
		\vdots &\vdots &\xrightarrow{\hspace{5mm}\alpha\hspace{5mm}}&x\\
		&&\otimes &\otimes &\otimes &\otimes &\otimes &&\\
		&&c_k&\xleftarrow[p_k^\prec]{}&a_k&\xrightarrow[p_k^\succ]{}&c_k&&
	\end{array}\end{displaymath}
	where $p_i^\prec$ is the arrow pointing to the left and $p_i^\succ$ the arrow pointing to the right in the
	span $p_i$.
\end{prop}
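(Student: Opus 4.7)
My plan is to verify in turn that $\Xi$ is well defined on homotopy classes of spans, is a group homomorphism, lands in $\Lambda$, and is bijective. Well-definedness is immediate: given homotopies $p_i\simeq p_i'$ for each $i$, tensoring the witnessing square fillings produces a square filling between the tensor spans, and post-composition with $\alpha$ preserves homotopy. The homomorphism property follows because composition of spans in each $\pi_1(\calS,c_i)$ uses a square filling $(\beta_i,\beta_i')$ of $(p_i^\succ,p_i'^\prec)$, and the tensor $(\beta_1\otimes\cdots\otimes\beta_k,\beta_1'\otimes\cdots\otimes\beta_k')$ is automatically a square filling of the corresponding cospan of tensored arrows, so the composition in $\pi_1(\calS,x)$ of $\Xi(p)$ and $\Xi(p')$ can be computed inside the same tensor.

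To see $\Xi(p_1,\ldots,p_k)$ lies in $\Lambda$, I evaluate its action on $P_i=[\alpha,m^{s_i}]$ using the obvious square filling $(\id,p_1^\succ\otimes\cdots\otimes p_k^\succ)$ of the cospan $((p_1^\succ\otimes\cdots\otimes p_k^\succ)*\alpha,\alpha)$. This yields $\Xi(p)\cdot P_i=[(p_1^\prec\otimes\cdots\otimes p_k^\prec)*\alpha,(p_1^\succ\otimes\cdots\otimes p_k^\succ)^*(m^{s_i})]$. Since $m^{s_i}$ marks precisely the coordinates of $c_i$ in $c$, both $(p_1^\succ\otimes\cdots\otimes p_k^\succ)^*(m^{s_i})$ and $(p_1^\prec\otimes\cdots\otimes p_k^\prec)^*(m^{s_i})$ mark precisely the $a_i$-coordinates of $a_1\otimes\cdots\otimes a_k$, and a further square filling $(\id,p_1^\prec\otimes\cdots\otimes p_k^\prec)$ witnesses $\Xi(p)\cdot P_i=P_i$.

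For injectivity, suppose $\Xi(p_1,\ldots,p_k)=1$, so that $(p_1^\prec\otimes\cdots\otimes p_k^\prec)*\alpha$ and $(p_1^\succ\otimes\cdots\otimes p_k^\succ)*\alpha$ are homotopic as arrows. By Lemma~\ref{69343}, $p_1^\prec\otimes\cdots\otimes p_k^\prec$ and $p_1^\succ\otimes\cdots\otimes p_k^\succ$ are themselves homotopic, and the calculus of fractions supplies an equalizer $d\co D\to a_1\otimes\cdots\otimes a_k$. Writing $d$ as a colored permutation followed by a tensor of operations whose outputs sit in the various $a_i$, and precomposing $d$ with a further permutation that groups the inputs of $D$ by the $a_i$-block into which they feed, I may replace $d$ by a homotopic equalizer of the form $d=d_1\otimes\cdots\otimes d_k$ with $d_i\co D_i\to a_i$. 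The equalization then reads $(d_1*p_1^\prec)\otimes\cdots\otimes(d_k*p_k^\prec)=(d_1*p_1^\succ)\otimes\cdots\otimes(d_k*p_k^\succ)$, and since tensor factors in $\calS(\calO)$ are determined by the tensor product, this forces $d_i*p_i^\prec=d_i*p_i^\succ$, so each $p_i$ is null-homotopic.

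Surjectivity is the main obstacle. Given $\gamma\in\Lambda$ represented by a span $(\gamma_d,\gamma_n)$, successive applications of square filling against $\alpha$ let me replace the span, up to homotopy, by one of the form $(u*\alpha,v*\alpha)$ for arrows $u,v\co e\to c$ with common source. The stabilizer condition $\gamma\cdot P_i=P_i$, evaluated via the square filling $(\id,u)$ of the cospan $(u*\alpha,\alpha)$, forces the equality $u^*(m^{s_i})=v^*(m^{s_i})$ of markings on $e$ for each $i$. These pulled-back markings therefore partition the coordinates of $e$ into the same $k$ blocks under $u$ and under $v$. Precomposing $u$ and $v$ with a common permutation of $e$ --- a homotopy of the representing span --- arranges these blocks contiguously, giving a tensor decomposition $e=e_1\otimes\cdots\otimes e_k$. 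The decomposition analysis of the injectivity step, applied simultaneously to $u$ and $v$, then yields $u=u_1\otimes\cdots\otimes u_k$ and $v=v_1\otimes\cdots\otimes v_k$ with $u_i,v_i\co e_i\to c_i$, and setting $p_i:=(c_i\xleftarrow{u_i}e_i\xrightarrow{v_i}c_i)$ gives $\Xi(p_1,\ldots,p_k)=\gamma$. The delicate step, and the place where the hypothesis that $m$ is ordered enters, is the passage from set-level equality of the pulled-back markings to the arrow-level tensor factorizations of $u$ and $v$: this requires that a colored permutation between two tensor products whose blocks appear in matching order and which preserves that block structure is itself a tensor of block permutations.
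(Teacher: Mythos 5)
Your proof is correct and follows essentially the paper's own route: you reduce everything to the compatibility of pullback markings with square fillings and to the fact that a span whose two legs induce the same block structure on the source splits as a tensor product (after precomposition with a permutation in the symmetric case), and you correctly identify the ordered-marking hypothesis as the point that makes the blocks of $c$ contiguous. The only structural difference is that the paper isolates the tensor-decomposition step as a separate lemma (Lemma~\ref{56209}, cited for injectivity), whereas you re-derive it inline and reuse the same reasoning for surjectivity; the two formulations are equivalent.
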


\begin{proof}
	It is not hard to see that the map is independent of the chosen
	representing spans $p_i$ and that it is a group homomorphism. Injectivity follows from Lemma \ref{69343}
	and Lemma \ref{56209} below. Before we prove surjectivity, we want to see that the image really lies in the
	subgroup $\Lambda$. We can use the representative $(\alpha,m)$ to extract representatives of submultiballs
	$P\in\calP$. The subwords $c_i$ are in one to one correspondence with the submultiballs $P\in\calP$. 
	A representative $(\alpha,m_i)$ of $P\in\calP$ corresponding to $c_i$ is obtained from $(\alpha,m)$ 
	by removing all markings except the markings on the subword $c_i$. The representing span of $\Xi(p_1,...,p_k)$ 
	pictured above can be written as $(p^\prec\alpha,p^\succ\alpha)$ where
	$p^\prec=p_1^\prec\otimes...\otimes p_k^\prec$ and $p^\succ=p_1^\succ\otimes...\otimes p_k^\succ$.
	Letting this span act on $(\alpha,m_i)$, we can choose $(\id,p^\succ)$
	as a square filling and the resulting representative is $(p^\prec\alpha,{p^\succ}^*(m_i))$. But this is
	equivalent to $(\alpha,m_i)$ because ${p^\prec}^*(m_i)={p^\succ}^*(m_i)$.
	
	Now we prove surjectivity. Let $\gamma\in\Lambda$ which can be represented by a path of the form
	\[x\xleftarrow{\hspace{5mm}\alpha\hspace{5mm}}c\xleftarrow{\hspace{5mm}z^\prec\hspace{5mm}}a
		\xrightarrow{\hspace{5mm}z^\succ\hspace{5mm}}c\xrightarrow{\hspace{5mm}\alpha\hspace{5mm}}x\]
	Observe the representatives $(\alpha,m_i)$ of the submultiballs $P\in\calP$ from above. A representative
	of $\gamma\cdot[\alpha,m_i]$ is given by $(z^\prec\alpha,{z^\succ}^*(m_i))$. So we have 
	$(\alpha,m_i)\sim(z^\prec\alpha,{z^\succ}^*(m_i))$. Of course, $(z^\prec,\id)$ is a square 
	filling of $(\alpha,z^\prec\alpha)$ and thus
	\[{z^\prec}^*(m_i)={z^\succ}^*(m_i)\]
	Now assume for the moment that the operad $\calO$ is planar. Then it follows easily from these equalities that
	the span $(z^\prec,z^\succ)$ splits as a product according to the decomposition $c=c_1\otimes...\otimes c_k$, 
	i.e.~there are $z_i^\prec\co a_i\rightarrow c_i$ and $z_i^\succ\co a_i\rightarrow c_i$ with 
	$z^\prec=z_1^\prec\otimes...\otimes z_k^\prec$ and $z^\succ=z_1^\succ\otimes...\otimes z_k^\succ$.
	By construction, the spans $(z_i^\prec,z_i^\succ)$ give a preimage of $\gamma$ under $\Xi$. If, on the
	other hand, $\calO$ is symmetric, then there is colored permutation $\sigma\in\mathfrak{Sym}(C)$
	such that the span $(\sigma z^\prec,\sigma z^\succ)$, which is homotopic to $(z^\prec,z^\succ)$, splits as 
	above and we can finish the proof also in this case.
\end{proof}

\begin{lem}\label{56209}
	Let $a\xleftarrow{q}b\xrightarrow{p}a$ be a span in $\calS$ which is a tensor product of $k$ spans
	$a_i\xleftarrow{q_i}b_i\xrightarrow{p_i}a_i$ for $i=1,...,k$, i.e.~$q=q_1\otimes....\otimes q_k$ and
	$p=p_1\otimes...\otimes p_k$. Then the span $(q,p)$ is null-homotopic if and only if each $(q_i,p_i)$
	is null-homotopic.
\end{lem}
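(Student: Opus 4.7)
The plan is to reduce both directions to statements about equalizers and then exploit the operation-by-operation structure of arrows in $\calS(\calO)$. As already used in the proof of Lemma \ref{69343}, a span $a\xleftarrow{x}b\xrightarrow{y}a$ is null-homotopic if and only if $x$ and $y$ are homotopic as parallel arrows, which under calculus of fractions is equivalent to the existence of an equalizing arrow $d\co D\rightarrow b$ with $dx=dy$.

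The easy direction is a straight tensor: given equalizers $d_i\co D_i\rightarrow b_i$ with $d_iq_i=d_ip_i$, set $d:=d_1\otimes\cdots\otimes d_k$. By functoriality of $\otimes$,
\[dq=(d_1q_1)\otimes\cdots\otimes(d_kq_k)=(d_1p_1)\otimes\cdots\otimes(d_kp_k)=dp,\]
so $(q,p)$ is null-homotopic.

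For the other direction, I would start with an equalizer $d\co D\rightarrow b=b_1\otimes\cdots\otimes b_k$ of $q$ and $p$. The key structural fact is that every arrow into a tensor product in $\calS(\calO)$ splits according to its codomain: representing $d$ as a pair $(\sigma,X)$, the list of operations $X$ groups naturally into sublists $X^{(i)}$ whose outputs assemble $b_i$, and the colored permutation $\sigma$ rearranges $D$ into the concatenation $D_1\otimes\cdots\otimes D_k$ of the corresponding input subwords. Thus $d=\pi\ast(d_1\otimes\cdots\otimes d_k)$ for a permutation isomorphism $\pi$ (the identity in the planar case) and arrows $d_i\co D_i\rightarrow b_i$. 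Composing $dq=dp$ on the left with $\pi^{-1}$ and using functoriality of $\otimes$ rewrites the equation as
\[(d_1q_1)\otimes\cdots\otimes(d_kq_k)=(d_1p_1)\otimes\cdots\otimes(d_kp_k).\]
Since the decomposition of an arrow into a tensor product is determined output-by-output, this forces $d_iq_i=d_ip_i$, so each $(q_i,p_i)$ is null-homotopic.

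The main obstacle is the structural decomposition step itself: in the symmetric case one must extract the permutation $\pi$ cleanly from the equivalence class represented by $(\sigma,X)$ and argue that the operation-by-operation uniqueness of tensor factorizations descends to the equivalence class rather than just to a single representative. Once these observations are in place the argument collapses to functoriality of $\otimes$ together with cancellation of the isomorphism $\pi$.
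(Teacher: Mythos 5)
Your argument is essentially the paper's proof: reduce null-homotopy to the existence of an equalizing arrow via the calculus of fractions, split that equalizer as a colored permutation followed by a tensor product $d_1\otimes\cdots\otimes d_k$ with $d_i\co D_i\rightarrow b_i$ (equivalently, precompose with a permutation to make the equalizer planar, as the paper phrases it), and invoke uniqueness of tensor factorizations in $\calS$ to peel off $d_iq_i=d_ip_i$ one factor at a time. The one detail you gloss over is the edge case where some $a_i=I$, which forces $q_i=p_i=\id_I$: then $d_iq_i=\id_I$ contributes nothing to the codomain, so the ``output-by-output'' uniqueness you invoke does not single out that slot; the paper sidesteps this by first assuming WLOG that $q_i\neq\id_I\neq p_i$, a harmless reduction since such factors are trivially null-homotopic.
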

\begin{proof}
	It is clear that if each $(q_i,p_i)$ is null-homotopic, then $(q,p)$ is null-homotopic. So we prove the converse.
	We can assume without loss of generality that $q_i\neq \id_I\neq p_i$ where $I$ is the monoidal unit in $\calS$,
	i.e.~the empty word. First observe that $p,q$ are parallel arrows and since $\calS$ satisfies the calculus of fractions, 
	they are homotopic if and only if there is an arrow $r\co c\rightarrow b$ with $rq=rp$. Now, by precomposing with
	an arrow in $\mathfrak{Sym}(C)$ if necessary, we can assume that $r$ is an arrow in
	$\calS(\calO_\mathrm{pl})$, i.e.~a tensor product of operations in $\calO$. Observe that in $\calS$
	we have $\alpha_1\otimes...\otimes\alpha_l=\beta_1\otimes...\otimes\beta_m$ for arrows $\alpha_i\neq\id_I\neq\beta_i$ if 
	and only if $l=m$ and $\alpha_i=\beta_i$ for each $i=1,...,l$. Now it follows easily that $r$ gives arrows $r_1,...,r_k$
	such that $r_iq_i=r_ip_i$ for each $i=1,...,k$. Thus, $q_i$ is homotopic to $p_i$ for each $i=1,...,k$.
\end{proof}

\subsection{The poset of partitions}

From now on, fix some base object $x$ which is split and progressive. More generally, in view of Remark \ref{50091}:
\vspace{2mm}
\begin{center}
	\framebox[1.1\width]{Let $y$ be a split object such that $x$ is $y$--progressive.}
\end{center}
\vspace{2mm}
Furthermore, let $n\in\NN$.

Two objects $a,b$ in $\calS$ are called equivalent if they are isomorphic in $\pi_1(\calS)$, i.e.~there is a path
(equivalently, a span) between them in $\calS$. Of course, $\pi_1(\calS,a)\cong\pi_1(\calS,b)$ in this case.

Let $c=(c_1,...,c_k)$ with $c_i\in C$ an object in $\calS$ and $m$ be a uni-marking on $c$, i.e.~there is only
one symbol in $m$. Then $m$ determines another object $\mathfrak{c}(m)$ by deleting all $c_i$'s which are not marked
by $m$. If $\alpha\co a\rightarrow c$ is an arrow, then $\mathfrak{c}(\alpha^*(m))\sim\mathfrak{c}(m)$ in the above
sense.

Let $B$ be a multiball. If $(\alpha,m)$ and $(\alpha',m')$ are representatives, then 
$\mathfrak{c}(m)\sim\mathfrak{c}(m')$. Thus, each multiball $B$ gives an equivalence class
$\mathfrak{cc}(B)$ of objects.

We say that a partition $\calP$ (over $x$) satisfies the $n$--condition with respect to $y$ if at least $n$ 
submultiballs $P\in\calP$ satisfy $y\in\mathfrak{cc}(P)$. The $n$--condition with respect to $y$ is preserved by 
the action of $\Gamma=\pi_1(\calS,x)$ on the partitions: If $\calP$ satisfies the $n$--condition with respect
to $y$, then also $\gamma\cdot\calP$ satisfies it.

We define a poset $(\mathbb{P},\leq)$: The objects of $\mathbb{P}$ are partitions over $x$ and $\calP\leq\calQ$
if and only if $\calP\supset\calQ$. The group $\Gamma=\pi_1(\calS,x)$ acts on this poset via the action on partitions.
Because of Remark \ref{12397}, the action indeed respects the relation $\leq$.

Since the $n$--condition with respect to $y$ is invariant under the action of $\Gamma$, we can define the invariant 
subposet $(\mathbb{P}_n,\leq)$ to be the full subpost consisting of partitions satisfying the $n$--condition with
respect to $y$. Next, we want
to show that
\begin{itemize}
	\item[{\bf 1.}] $\mathbb{P}_n\neq\emptyset$ and
	\item[{\bf 2.}] $(\mathbb{P}_n,\leq)$ is filtered.
\end{itemize}
This implies that the poset $\mathbb{P}_n$ is \emph{contractible}.

{\bf 1.} Since $x$ is $y$--progressive, there is an arrow $a_1\otimes y\otimes a_2\rightarrow x$. Apply $y$'s split condition 
$(n-1)$ times to find an arrow $z\rightarrow x$ where $z$ has a tensor product decomposition with at least $n$ factors equal to 
$y$. Mark each of these factors with a different symbol and the rest with yet another symbol. This yields a partition 
$\calP\in\mathbb{P}_n$.

{\bf 2.} Let $\calP,\calQ\in\mathbb{P}_n$. We have to find $\calR\in\mathbb{P}_n$ with $\calP,\calQ\leq\calR$.
First we find one in $\mathbb{P}$. Let $(\alpha_\calP,m_\calP)$ and $(\alpha_\calQ,m_\calQ)$ be representatives of 
$\calP$ and $\calQ$ respectively. Choose a square filling $(\beta_\calP,\beta_\calQ)$ of 
$(\alpha_\calP,\alpha_\calQ)$ and set $\delta=\beta_\calP\alpha_\calP=\beta_\calQ\alpha_\calQ$. Now find a full marking
$\mu\subset\beta_\calP^*(m_\calP),\beta_\calQ^*(m_\calQ)$, for example by marking each coordinate of 
$\op{dom}(\delta)$ with a different symbol. Then $\calR=[\delta,\mu]$ is a common refinement of $\calP$
and $\calQ$. Now use that $x$ is $y$--progressive to find an arrow $\eta\co z\rightarrow\op{dom}(\delta)$
where $z$ has a tensor product decomposition with at least one factor equal to $y$. Then apply $y$'s split condition
$(n-1)$ times to obtain an arrow $\nu\co w\rightarrow z$ where $w$ has a tensor product decomposition with at least
$n$ factors equal to $y$. Observe the marked arrow $(\nu\eta\delta,(\nu\eta)^*(\mu))$. The so-called link condition
in Remark \ref{50091} ensures that the $n$ factors of $w$ equal to $y$ are marked with the same symbol 
in the marking $(\nu\eta)^*(\mu)$. Refine this marking such that these factors are marked with 
new different symbols. This gives a representative of a partition satisfying the $n$--condition with respect to $y$, 
refining $\calR$ and thus refining both $\calP$ and $\calQ$.

\vspace{2mm}

A simplex $\sigma$ in the poset $\mathbb{P}_n$ is a finite ascending sequence of objects, written 
$[\calP_0<\calP_1<...<\calP_p]$. We now observe the stabilizer subgroup $\Gamma_\sigma$ of such a simplex.
By definition, an element $\gamma$ is in this stabilizer subgroup if and only if $\{\calP_0,...,\calP_p\}=
\{\gamma\cdot\calP_0,...,\gamma\cdot\calP_p\}$. But since the action of $\gamma$ respects $\leq$, this is
equivalent to $\gamma\cdot\calP_i=\calP_i$ for each $i=0,...,p$. So each $\gamma\in\Gamma_\sigma$ fixes
$\sigma$ vertex-wise. Observe the subgroup
\[\Lambda_\sigma:=\{\gamma\in\Gamma\mid\gamma\cdot P=P\text{ for all }P\in\calP_p\}<\Gamma\]
By Proposition \ref{88520}, we know that $\Lambda_\sigma\cong\pi_1(\calS,c_1)\times...\times\pi_1(\calS,c_k)$
for appropriate objects $c_i$. Since $\calP_p$ satisfies the $n$--condition with respect to $y$, at least $n$ 
of these objects are equivalent to $y$ and thus at least $n$ of the factors in the product decomposition of
$\Lambda_\sigma$ are isomorphic to $\Upsilon:=\pi_1(\calS,y)$. So we find a normal subgroup 
$\Lambda'_\sigma\vartriangleleft\Lambda_\sigma$ with $\Lambda'_\sigma\cong\Upsilon^n$. Below, we will show that 
$\Lambda_\sigma$ is a normal subgroup of $\Gamma_\sigma$. So we arrive at the following situation
\vspace{2mm}
\begin{center}
	\framebox[1.1\width]{$\Upsilon^n\cong\Lambda'_\sigma\vartriangleleft\Lambda_\sigma\vartriangleleft\Gamma_\sigma$}
\end{center}
\vspace{2mm}

\begin{lem}
	Let $\calR_1,\calR_2$ be semi-partitions and $\calP$ be a partition with $\calP\subset\calR_1$.
	Assume
	\[\forall_{R_1\in\calR_1}\ \exists_{R_2\in\calR_2}\ \forall_{P\in\calP}\ P\subset R_1
		\Longrightarrow P\subset R_2\]
	Then we have $\calR_1\subset\calR_2$.
\end{lem}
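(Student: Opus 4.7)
The plan is to apply Proposition \ref{45613}, which reduces the goal $\calR_1 \subset \calR_2$ to producing, for every $R_1 \in \calR_1$, an $R_2 \in \calR_2$ with $R_1 \subset R_2$. I would fix such an $R_1$ and let $R_2 \in \calR_2$ be the submultiball supplied by the hypothesis for this particular $R_1$.

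Next I would use the common-representative technique from the proof of Proposition \ref{45613}: since $\calP \subset \calR_1$, choose a single arrow $\alpha \co c \to x$ together with markings $m_\calP \subset m_{\calR_1}$ on $c$ representing $\calP$ and $\calR_1$ respectively. Let $s$ be the symbol of $m_{\calR_1}$ corresponding to $R_1$, and let $t_1,\dots,t_r$ be the symbols of $m_\calP$ lying above $s$ under the inclusion of symbol sets. The associated submultiballs $P_i := [\alpha, m_\calP^{t_i}]$ then satisfy $P_i \subset R_1$ by construction, so the hypothesis yields $P_i \subset R_2$ for every $i$. The crucial structural observation is that, because $\calP$ is a partition, $m_\calP$ marks every coordinate of $c$; hence the coordinates of $c$ marked with $s$ in $m_{\calR_1}$ are precisely the union of coordinates marked with some $t_i$ in $m_\calP$.

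To conclude $R_1 \subset R_2$, I would pick a representative $(\alpha', m_{R_2})$ of $R_2$ and a square filling $(\gamma, \gamma')$ of $(\alpha, \alpha')$. The containments $P_i \subset R_2$ then imply $\gamma^*(m_\calP^{t_i}) \subset (\gamma')^*(m_{R_2})$ on $\op{dom}(\gamma)$ for each $i$, since any square filling witnesses the containment. The covering observation from the previous paragraph is preserved by $\gamma^*$ (pulling back a marking simply labels each input of an operation by the label of its output), so the marked coordinates of $\gamma^*(m_{\calR_1}^s)$ are contained in the union of the marked coordinates of the $\gamma^*(m_\calP^{t_i})$'s, and hence in those of $(\gamma')^*(m_{R_2})$. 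Since both sides are uni-markings, this yields $\gamma^*(m_{\calR_1}^s) \subset (\gamma')^*(m_{R_2})$, i.e. $R_1 \subset R_2$. The main difficulty is really only bookkeeping: one must verify that fullness of $m_\calP$ and the bijection between symbols and submultiballs interact correctly with pullback along the square filling. No new ingredient beyond Proposition \ref{45613} and the square-filling machinery appears necessary.
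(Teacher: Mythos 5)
Your proposal is correct and is essentially the paper's argument: both rely on a common arrow carrying the markings, the fullness of $m_\calP$, and $m_\calP\subset m_{\calR_1}$ to see each $R_1$-block as a union of $\calP$-blocks, and then use the hypothesis to land all of those $\calP$-blocks inside a single $R_2$-block. The paper is slightly more economical in the bookkeeping: it applies the square-filling technique twice at the outset to place all three markings $m_\calP$, $m_{\calR_1}$, $m_{\calR_2}$ on a single arrow $\delta$, so that $m_{\calR_1}\subset m_{\calR_2}$ (and hence $\calR_1\subset\calR_2$) can be read off directly from Remark~\ref{00176}, avoiding the extra step of verifying that the covering observation persists under a further pullback when you relate $\alpha$ to a separate representative of $R_2$.
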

\begin{proof}
	By applying the square filling technique twice, we find an arrow $\delta$ with three markings 
	$m_\calP,m_{\calR_1},m_{\calR_2}$ on its domain such that $(\delta,m_\calP)$ represents $\calP$ and 
	$(\delta,m_{\calR_i})$ represents $\calR_i$. Since $\calP\subset\calR_1$ we have $(\delta,m_\calP)\subset
	(\delta,m_{\calR_1})$ and therefore $m_\calP\subset m_{\calR_1}$. Note that $\calP$ is a partition and 
	therefore $m_\calP$ is a full marking. Now the assumption of the statement implies 
	$m_{\calR_1}\subset m_{\calR_2}$ and thus $\calR_1\subset\calR_2$.
\end{proof}

We first show that $\Lambda_\sigma$ is contained in $\Gamma_\sigma$. So let $\gamma\in\Lambda_\sigma$,
i.e.~$\gamma\cdot P=P$ for all $P\in\calP_p$. In particular, we have $\gamma\cdot\calP_p=\calP_p$ (Proposition \ref{45613}
and Remark \ref{72125}).
We have to show $\gamma\cdot\calP_i=\calP_i$ also for the other $i$'s. Write $\calP:=\calP_p$ and $\calR:=\calP_i$ for some
other $i$. Then we have $\calP\subset\calR$. We want to apply the above lemma to $\calR_1=\calR$ and
$\calR_2=\gamma\cdot\calR$ and deduce $\calR\subset\gamma\cdot\calR$. So let $R\in\calR$ and observe
$\gamma\cdot R\in\gamma\cdot\calR$. Let $P\in\calP$ with $P\subset R$. Then $P=\gamma\cdot P\subset\gamma\cdot R$
and the assumption of the lemma is satisfied. Similarly, we get $\calR\subset\gamma^{-1}\cdot\calR$ and thus
$\gamma\cdot\calR\subset\calR$. This yields $\gamma\cdot\calR=\calR$, q.e.d.

Now we show that $\Lambda_\sigma$ is normal in $\Gamma_\sigma$. Let $\gamma\in\Gamma_\sigma$ and
$\alpha\in\Lambda_\sigma$. We have to show $\gamma^{-1}\alpha\gamma\in\Lambda_\sigma$, 
i.e.~$\gamma^{-1}\alpha\gamma\cdot P=P$ for all $P\in\calP_p=:\calP$ or equivalently 
$\alpha\cdot(\gamma\cdot P)=\gamma\cdot P$ for all $P\in\calP$. Since $\gamma\cdot\calP=\calP$, 
we have a bijection $f\co \{P\in\calP\}\rightarrow\{P\in\calP\}$ such that $\gamma\cdot P=P\into f$ for all
$P\in\calP$ (Proposition \ref{45613}). Consequently, if $P\in\calP$, 
$\alpha\cdot(\gamma\cdot P)=\alpha\cdot(P\into f)=P\into f=\gamma\cdot P$, q.e.d.

\subsection{End of the proof}

Let $\mathfrak{P}$ be a property of groups which is closed under taking products and let $\calM$ be a coefficient
system which is $\mathfrak{P}$--K\"unneth and $\mathfrak{P}$--inductive.
We will only give the proof for homology. Using analogous devices for cohomology, we obtain a proof of the 
cohomological version of the statement.

Our main tool will be a spectral sequence explained in Brown's book \cite{bro:fpo}*{Chapter VII.7} (see also
\cite{sa-th:laa}*{Subsection 4.1}). If we plug in our $\Gamma$--complex $(\mathbb{P}_n,\leq)$ and the $\ZZ\Gamma$--module $\calM\Gamma$,
we obtain a spectral sequence $E^k_{pq}$ with
\[E^1_{pq}=\bigoplus_{\sigma\in\Sigma_p}H_q(\Gamma_\sigma,\calM\Gamma)\Rightarrow 
	H_{p+q}(\Gamma,\calM\Gamma)\]
where $\Sigma_p$ is set of $p$--cells representing the $\Gamma$--orbits of $(\mathbb{P}_n,\leq)$. This uses that the poset $\mathbb{P}_n$
is contractible and that the cell stabilizers fix the cells pointwise.

We assumed that $\Upsilon$ satisfies $\mathfrak{P}$ and that $H_0(\Upsilon,\calM\Upsilon)=0$. Applying the
$\mathfrak{P}$--K\"unneth property $(n-1)$ times, we obtain $H_k(\Upsilon^n,\calM\Upsilon^n)=0$ for $k\leq n-1$.
So we have $H_k(\Lambda'_\sigma,\calM\Lambda'_\sigma)=0$ for $k\leq n-1$. The property $\mathfrak{P}$--inductive
yields $H_k(\Lambda'_\sigma,\calM\Gamma)=0$ for $k\leq n-1$. Since $\Lambda'_\sigma\vartriangleleft
\Lambda_\sigma\vartriangleleft\Gamma_\sigma$, we can apply the Hochschild--Serre spectral sequence twice to
obtain $H_k(\Gamma_\sigma,\calM\Gamma)=0$ for $k\leq n-1$. The above spectral sequence now yields
$H_k(\Gamma,\calM\Gamma)=0$ for $k\leq n-1$. Since $n$ was arbitrary, the result follows.

\section{Non-amenability and infiniteness}

In this section we use the techniques from Section \ref{96238} to prove non-amenability and infiniteness of some operad groups.
Note that semi-partitions and the action on the set of semi-partitions can also be defined in the braided case.

\begin{lem}\label{67656}
	If $\calO$ satisfies the calculus of fractions, then the action of the colored permutations in
	$\op{Aut}_{\mathfrak{Sym}(C)}(X)$ or the colored braids in 
	$\op{Aut}_{\mathfrak{Braid}(C)}(X)$ on the set of arrows $\op{Hom}_{\calS(\calO)}(X,Y)$
	is free. In particular, in the operad $\calO$, the action of the symmetric groups or the braid
	groups on the sets of operations is free.
\end{lem}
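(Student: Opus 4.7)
The plan is to reduce the freeness assertion to a combinatorial fact about canonical forms of arrows in $\calS(\calO)$, using the equalization property of the calculus of fractions. Assume $\sigma \in \op{Aut}_{\mathfrak{Sym}(C)}(X)$ satisfies $\sigma * \alpha = \alpha$ for some $\alpha \co X \to Y$ in $\calS(\calO)$. Rewriting this as $\sigma * \alpha = \id_X * \alpha$, I would first invoke the equalization property of the calculus of fractions (the same ingredient used in the proof of Lemma \ref{69343}) to produce an arrow $e \co E \to X$ with $e * \sigma = e * \id_X = e$.

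Next I would exploit the canonical form of arrows in $\calS(\calO)$. Writing $X = (c_1, \ldots, c_n)$, the arrow $e$ may be represented, uniquely modulo the action of $\prod_j S_{\mathrm{arity}(O_j)}$ on the operations and on the permutation part, as a pair $(\tau, (O_1, \ldots, O_n))$ in which $\tau$ is a colored permutation of the leaves and $O_j$ is an operation with output color $c_j$. Post-composing with $\sigma$ then reindexes the operations as $(O_{\sigma^{-1}(1)}, \ldots, O_{\sigma^{-1}(n)})$ and modifies $\tau$ by a block permutation $\pi_\sigma$ of the leaves that shuffles the leaf-blocks of the various $O_j$ according to $\sigma$.

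The equation $e * \sigma = e$ in canonical form will then demand the existence of intra-block permutations $\rho_j \in S_{\mathrm{arity}(O_j)}$ satisfying $O_{\sigma^{-1}(j)} = O_j \cdot \rho_j$ and $\pi_\sigma = \rho_1 \oplus \cdots \oplus \rho_n$. The right hand side preserves each leaf-block setwise, while $\pi_\sigma$ permutes entire blocks; these two permutations can agree only if every nonempty block is fixed by $\pi_\sigma$, which forces $\sigma$ to fix every position, hence $\sigma = \id$. The braided case is handled along the same lines after replacing colored permutations with colored braids and using that a colored braid whose underlying permutation moves distinct strand-blocks cannot be an intra-block braid. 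The \emph{in particular} assertion then falls out by taking $\alpha$ to be the operation $\mu$ itself, with $X$ its input-tuple and $Y = (d)$ the output color.

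The main obstacle I anticipate is the careful treatment of operations of arity zero, if any occur in the canonical form of $e$: empty leaf-blocks can be shuffled by $\pi_\sigma$ invisibly, so an auxiliary argument is needed to rule this out, presumably by showing that the calculus of fractions forces any two zero-ary operations with the same output color to agree and hence forbids $\sigma$ from nontrivially permuting such positions.
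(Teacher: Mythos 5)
Your proof takes essentially the same route as the paper's: reduce via the equalization property of the calculus of fractions to an arrow $e$ with $e*\sigma = e$, then analyze the canonical form of $e$ and observe that post-composition with $\sigma$ changes the permutation part of $e$ by a block permutation that shuffles the input blocks of the operations, which cannot coincide with an intra-block permutation unless $\sigma$ fixes every block. This matches the paper's computation of $z*[\sigma,\id]=[\bar\sigma,\bar\Psi]$ and the conclusion that $(\bar\sigma,\bar\Psi)\sim(\id,\Psi)$ forces $\sigma$ trivial.

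One remark on your anticipated obstacle. The concern about nullary operations is legitimate: if some $O_j$ has arity zero, its input block is empty and the induced block permutation $\pi_\sigma$ can be trivial even though $\sigma$ nontrivially permutes those output slots, and $e*\sigma=e$ then only says that $\Psi$ is constant on the $\sigma$-orbits of nullary positions. However, the fix you sketch goes in the wrong direction: showing that any two nullary operations with the same output color agree would make $\sigma$'s permutation of such positions \emph{undetectable} on $e$, which is precisely the situation in which the argument breaks down, not a way to repair it. (Indeed, for the commutative-operad with a nullary operation, $\calS(\calO)$ is the skeleton of finite sets, which satisfies both the Ore and equalization conditions, yet the $S_n$-action on operations is trivial; so the statement genuinely needs $\calO(0)=\emptyset$ or an equivalent nondegeneracy assumption.) The resolution the paper relies on is that the operads under consideration have no nullary operations at all --- this holds for the cube-cutting and local-similarity operads in the applications --- so every block of $e$ has positive size, $\bar\sigma$ faithfully records $\sigma$, and the argument closes without needing any claim about uniqueness of nullary operations.
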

\begin{proof}
	Let $[\alpha,\Theta]$ be an element in $\op{Hom}_{\calS(\calO)}(X,Y)$ and $\sigma\in\op{Aut}_{\mathfrak{Sym}(C)}(X)$ 
	or $\sigma\in\op{Aut}_{\mathfrak{Braid}(C)}(X)$. We have to show that $[\sigma,\id]*[\alpha,\Theta]=[\alpha,\Theta]$ 
	implies that $\sigma$ is trivial. From this equality and the equalization property of $\calS(\calO)$, we obtain an arrow 
	$z:=[\delta,\Psi]$ with $z*[\sigma,\id]=z$. We can assume without loss of generality that $\delta=\id$.
	We then have $z*[\sigma,\id]=[\bar{\sigma},\bar{\Psi}]$ with $\bar{\sigma}=\Psi{\curvearrowright}\sigma$ and 
	$\bar{\Psi}=\Psi{\curvearrowleft}\sigma$. Consequently, the pairs $(\bar{\sigma},\bar{\Psi})$ and $(\id,\Psi)$ 
	are equivalent in $\mathfrak{Sym}(C)\times\calS(\calO_\mathrm{pl})$ or $\mathfrak{Braid}(C)\times\calS(\calO_\mathrm{pl})$.
	This is only possible if $\sigma$ is trivial.
\end{proof}

Let $\calO$ be a symmetric or braided operad. Let $\alpha$ be an arrow in $\calS(\calO)$. 
For any colored permutation $\sigma\in\mathfrak{Sym}(C)$ or colored braid $\sigma\in\mathfrak{Braid}(C)$ with suitable domain and 
codomain, we can form the group element $\gamma$ represented by the span $(\alpha,[\sigma,\id]*\alpha)$. Recall that the first 
arrow always denotes the denominator, i.e.~points to the left.

\begin{lem}\label{22286}
	Assume $\calO$ satisfies the calculus of fractions. Then
	\[\sigma\neq 1\ \ \Longrightarrow\ \ \gamma\neq 1\]
\end{lem}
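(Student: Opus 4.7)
The strategy is to prove the contrapositive: assuming $\gamma=1$, I will deduce $\sigma=1$ by two applications of the equalization property of the calculus of fractions, followed by the same endgame used in the proof of Lemma~\ref{67656}.

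First, the two legs $\alpha$ and $[\sigma,\id]*\alpha$ of the span representing $\gamma$ share both domain and codomain, so (as in the opening of the proof of Lemma~\ref{69343}) the span is null-homotopic iff these parallel arrows are homotopic, and by the calculus of fractions this holds iff there is an arrow $r$ with
\[r*\alpha \;=\; r*[\sigma,\id]*\alpha \;=\; (r*[\sigma,\id])*\alpha.\]
Reading the outer equality, the parallel arrows $r$ and $r*[\sigma,\id]$ are equalized by $\alpha$.

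Applying the equalization property to this second pair produces an arrow $s$ with $s*r = (s*r)*[\sigma,\id]$. Setting $\beta := s*r$, I arrive at
\[\beta*[\sigma,\id] \;=\; \beta,\]
which is precisely the equation $z*[\sigma,\id]=z$ obtained in the proof of Lemma~\ref{67656}. From here the remaining argument of that proof applies verbatim: write $\beta=[\delta,\Psi]$, reduce (via a further equalization) to the case $\delta=\id$, compute $\beta*[\sigma,\id]$ using the push operations $\Psi{\curvearrowright}\sigma$ and $\Psi{\curvearrowleft}\sigma$, and observe that the forced equivalence with $[\id,\Psi]$ in $\mathfrak{Sym}(C)\times\calS(\calO_{\mathrm{pl}})$ (or $\mathfrak{Braid}(C)\times\calS(\calO_{\mathrm{pl}})$ in the braided case) is only possible when $\sigma$ is trivial.

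The only delicate point is the transition from ``$\alpha$ and $[\sigma,\id]*\alpha$ are homotopic'' to the equation $\beta*[\sigma,\id]=\beta$, but this is just two successive uses of equalization; no genuinely new ingredient is needed beyond invoking the structure already established in Lemma~\ref{67656}. Accordingly, I expect the main obstacle to be purely bookkeeping.
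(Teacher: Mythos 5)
Your proof is correct, but it takes a genuinely different route from the paper's. The paper splits Lemma~\ref{22286} into two cases: for a colored permutation (or a braid with nontrivial underlying permutation) it picks a suitable single\-/marked semi\-/partition $\calR$, shows via Remark~\ref{00176} that $\gamma\cdot\calR\neq\calR$, and concludes $\gamma\neq 1$; only in the pure\-/braid case does it pass through the calculus of fractions, and even there it does a \emph{single} equalization, uses the purity hypothesis to get $\Theta\curvearrowleft\sigma=\Theta$, and then invokes Lemma~\ref{67656} as a black box to deduce $\bar\sigma=1$ and hence $\sigma=1$. Your argument instead proves the contrapositive uniformly for both the symmetric and the braided case: one equalization to turn ``$\alpha$ and $[\sigma,\id]*\alpha$ homotopic'' into $r*\alpha=r*[\sigma,\id]*\alpha$, a second equalization applied to the pair $r$ and $r*[\sigma,\id]$ (which are \emph{co}equalized by $\alpha$, not equalized as you wrote) to get $\beta$ with $\beta*[\sigma,\id]=\beta$, and then the endgame of the proof of Lemma~\ref{67656}. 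This is cleaner in that it avoids the case distinction and the additional injectivity claim $\bar\sigma=1\Rightarrow\sigma=1$ that the paper's pure\-/braid branch needs; the price is that you rely on the \emph{interior} of the proof of Lemma~\ref{67656} (the step from $z*[\sigma,\id]=z$ to $\sigma=1$), not on that lemma's statement as a black box, so strictly speaking you are re\-/running that argument rather than citing the lemma. Two small slips in wording: ``equalized by $\alpha$'' should read ``coequalized by $\alpha$''; and the reduction to $\delta=\id$ is not ``a further equalization'' but simply precomposition with the inverse of the isomorphism $[\delta,\id]$. Neither affects the correctness of your argument.
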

\begin{proof}
	First consider the symmetric case. Observe the semi-partition $\calR$ represented by the marked arrow 
	$(\alpha,m)$ where $m$ is a marking on the domain of $\alpha$ with 
	only one marked coordinate and this coordinate is non-trivially permuted by $\sigma^{-1}$. 
	It is easy to see that $\gamma\cdot\calR$ is represented by $(\alpha,[\sigma,\id]^*(m))$. 
	The marking $m':=[\sigma,\id]^*(m)$ is different from $m$ because $\sigma^{-1}$ maps the only
	marked coordinate of $m$ to a different coordinate by assumption. 
	From Remark \ref{00176} it follows that the marked
	arrow $(\alpha,m)$ cannot be equivalent to $(\alpha,m')$ and thus $\gamma\cdot\calR\neq\calR$. Consequently
	$\gamma\neq 1$.
	
	Now if $\calO$ is braided we can apply the above argument verbatim if we require that the braid $\sigma$
	has a non-trivial permutation part. But there are of course non-trivial braids which are trivial as
	permutations (so-called pure braids). 
	Assume that $\sigma$ is such a pure braid and that $\gamma=1$. Note that the latter means that
	the parallel arrows $\sigma\alpha:=[\sigma,\id]*\alpha$ and $\alpha$ are homotopic. 
	Since $\calS(\calO)$ satisfies the calculus of fractions, this means that there is an arrow $\delta$ with
	$\delta*\sigma\alpha=\delta*\alpha$. We can assume without loss of generality that 
	$\delta\in\calS(\calO_{\mathrm{pl}})$, i.e.~that $\delta=[\id,\Theta]$. Composing in
	$\calS(\calO)$, we get $\delta*[\sigma,\id]=[\bar{\sigma},\bar{\Theta}]$ where
	$\bar{\sigma}=\Theta{\curvearrowright}\sigma$ and $\bar{\Theta}=\Theta{\curvearrowleft}\sigma$. 
	Using that $\sigma$ is pure we immediately see $\bar{\Theta}=\Theta$. So we have
	\[[\bar{\sigma},\id]*\big([\id,\Theta]*\alpha\big)=\big(\delta*[\sigma,\id]\big)*\alpha=[\id,\Theta]*\alpha\]
	Lemma \ref{67656} now gives $\bar{\sigma}=1$ and thus $\sigma=1$.
\end{proof}

Denoting the element $\gamma$ suggestively by $\xleftarrow{\alpha}\sigma\xrightarrow{\alpha}$, the lemma implies
that two such group elements $\xleftarrow{\alpha}\sigma\xrightarrow{\alpha}$ and 
$\xleftarrow{\alpha}\sigma'\xrightarrow{\alpha}$ are equal if and only if $\sigma=\sigma'$. We will use this now
to give a proof of the following proposition.

\begin{prop}\label{22947}
	Let $\calO$ be a symmetric operad satisfying the calculus of fractions and let $X$ be a split 
	object of $\calS(\calO)$. Then $\Gamma=\pi_1(\calO,X)$ contains a non-abelian free subgroup and
	is therefore non-amenable.
\end{prop}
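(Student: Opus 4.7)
The plan is to combine Lemma \ref{22286} with a ping-pong argument on the $\Gamma$-action on semi-partitions developed in Section \ref{96238}.

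First I observe that for a fixed arrow $\alpha\co Y\to X$ in $\calS(\calO)$, the assignment
\[\sigma\longmapsto\gamma_\sigma:=[\alpha,[\sigma,\id]*\alpha]\]
is a group homomorphism $\op{Aut}_{\mathfrak{Sym}(C)}(Y)\to\Gamma$ (a short square-filling check, taking the trivial pair $(\id,\sigma)$ as a filling of $(\sigma\alpha,\alpha)$, yields $\gamma_\sigma\gamma_\tau=\gamma_{\sigma\tau}$); by Lemma \ref{22286} this homomorphism is injective. Iterating the split condition gives arrows whose domains contain arbitrarily many copies of $X$, so every finite colored symmetric group on such $X$-coordinates appears as a subgroup of $\Gamma$.

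Since a single finite symmetric group contains no non-abelian free subgroup, the free subgroup must come from combining two such embeddings coming from arrows at different scales. I would take $\alpha\co Y\to X$ from a single application of the split condition, so $Y$ has two distinguished $X$-factors at positions $p_1,p_2$, and $\beta\co Z\to X$ obtained from $\alpha$ by further applying split at each of these positions, so $Z$ has four $X$-factors at positions $q_1,q_1',q_2,q_2'$ (where $q_i,q_i'$ are the two new $X$-factors obtained by splitting $p_i$). Let $\sigma$ be the transposition of $p_1$ and $p_2$, giving $g:=\gamma_\sigma\in\Gamma$ of order $2$, and let $\tau$ be a $4$-cycle on $q_1,q_1',q_2,q_2'$ that does not respect the partition $\{q_1,q_1'\}\sqcup\{q_2,q_2'\}$, giving $h:=\gamma_\tau\in\Gamma$ of order $4$.

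To verify that $\langle g,h\rangle$ contains a non-abelian free subgroup, I would apply the classical ping-pong lemma to the action of $\Gamma$ on semi-partitions. Using Proposition \ref{45613}, Remark \ref{72125}, and the description of $\gamma\cdot\calP$ via square filling, the element $g$ permutes the submultiballs at the $\alpha$-depth while $h$ mixes the finer submultiballs at the $\beta$-depth in a way that crosses the grouping coming from $\alpha$. Choosing ping-pong sets $\calU,\calV$ consisting of semi-partitions whose single-marked representatives are concentrated on the $\alpha$-depth and on distinguished $\beta$-depth positions respectively, I would verify that $g\calV\subset\calU$ and that $h^k\calU\subset\calV$ for $k\in\{1,2,3\}$. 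The ping-pong lemma then yields $\langle g,h\rangle\cong\ZZ/2*\ZZ/4$, a free product of finite cyclics distinct from $\ZZ/2*\ZZ/2$, which contains a non-abelian free subgroup of finite index. Non-amenability of $\Gamma$ is then automatic.

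The main obstacle is the careful verification of the ping-pong inclusions. This requires tracking $\gamma\cdot\calP$ through the square-filling construction of Section \ref{96238} and confirming that the supports of the $g$- and $h$-actions on semi-partitions genuinely separate at the two depths without leaking into each other under the composed arrows. The necessary tools---Lemma \ref{25010}, functoriality of pulled-back markings, and the inequivalence criteria of Remark \ref{00176} and Proposition \ref{45613}---are all in place, and the verification parallels the marked-arrow argument used in the proof of Lemma \ref{22286}.
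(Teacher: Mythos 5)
Your overall strategy mirrors the paper's: apply Lemma \ref{22286} to produce elements whose orders match the number of $X$--factors being permuted, and then play ping-pong on the set of semi-partitions (balls) to recover a free product of two finite cyclic groups inside $\Gamma$. The paper chooses an element of order $2$ and an element of order $3$, giving $\ZZ/2*\ZZ/3$; you propose order $2$ and order $4$. Both free products contain non-abelian free subgroups, so the target is fine.

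However, the specific ping-pong you sketch cannot work with the ball-based sets you indicate, and the reason is a pigeonhole obstruction that the paper's order-$3$ choice is precisely designed to avoid. Write $p_1=\{q_1,q_1'\}$ and $p_2=\{q_2,q_2'\}$ for the two depth-$1$ positions and their four depth-$2$ children. Your sets $\calU,\calV$ are supported, respectively, on $p_1$-balls and on $p_2$-balls (or on some sub-collection of the four quarters; the argument is the same). For the free-product ping-pong you need $h^k\calU\subset\calV$ for \emph{all} $k\in\{1,2,3\}$, which forces $h^k(q_1)\in p_2$ for $k=1,2,3$. But $\{h(q_1),h^2(q_1),h^3(q_1)\}$ is a set of three distinct quarters (since $h$ is a $4$--cycle) and $p_2$ has only two elements, so one of the nontrivial powers of $h$ sends $q_1$ to $q_1'$, keeping it inside $p_1$. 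Concretely, if $h=(q_1\,q_2\,q_1'\,q_2')$ then $h^2$ preserves $p_1$; if $h=(q_1\,q_2\,q_2'\,q_1')$ then $h^3(q_1)=q_1'$. No $4$--cycle avoids this. The paper's choice instead uses a tree with three leaves $\{\text{bottom},\text{top-bottom},\text{top-top}\}$ and a $3$--cycle $\gamma_2$ sending bottom to top-top and top-top to top-bottom. Since both nontrivial powers $\gamma_2,\gamma_2^2$ land bottom inside the two top-quarters, the inclusions $\gamma_2^k A_1\subset A_2$ for $k=1,2$ go through. This asymmetric three-leaf configuration, rather than a balanced four-leaf one, is the essential device; with your order-$4$ element you would need a fundamentally different choice of ping-pong sets, which the proposal does not supply. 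Replacing your $4$--cycle by the paper's $3$--cycle fixes the argument and recovers the published proof.
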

\begin{proof}	
	Using the split condition on $X$, we will explicitly construct two non-trivial elements
	$\gamma_1,\gamma_2\in\Gamma$ of order $2$ and $3$ respectively. Then we will define two disjoint subsets
	$A_1,A_2$ of the set of semi-partitions over $X$ such that $\gamma_1\cdot A_2\subset A_1$ and 
	$\gamma_2^n\cdot A_1\subset A_2$ for $n=1,2$. The Ping--Pong Lemma then shows that the subgroup 
	$\langle\gamma_1,\gamma_2\rangle$ generated by the two elements $\gamma_1$ and $\gamma_2$ is isomorphic 
	to the free product $\langle\gamma_1\rangle*\langle\gamma_2\rangle$. So we have found a subgroup
	which is isomorphic to $\ZZ_2*\ZZ_3$. Since $\ZZ_2*\ZZ_3$ contains a free non-abelian subgroup, the
	proof of the proposition is then complete.
	
	We now give the constructions. Because $X$ is split, there is an arrow
	\[\varpi\co A_1\otimes X\otimes A_2\otimes X\otimes A_3\rightarrow X\]
	For better readability, we assume that $X$ is a single color and $A_1=A_2=A_3=I$. The construction 
	goes the same way in the general case (with obvious modifications). So we assume that $\varpi$ is just
	an operation with two inputs of color $X$ and an output of color $X$. Define $\gamma_1$ to be the
	following element
	\begin{center}\begin{tikzpicture}[scale=0.45]
		\draw (0,0) -- +(2,2) -- +(2,-2) -- +(0,0);
		\draw (7,0) -- +(-2,2) -- +(-2,-2) -- +(0,0);
		\draw (2,1.2) to[out=right,in=left] (5,-1.2);
		\draw (2,-1.2) to[out=right,in=left] (5,1.2);
		\node at (1.2,0) {$\varpi$};
		\node at (5.8,0) {$\varpi$};
	\end{tikzpicture}\end{center}
	and $\gamma_2$ to be the following element
	\begin{center}\begin{tikzpicture}[scale=0.45]
		\draw (0,0) -- +(2,2) -- +(2,-2) -- +(0,0);
		\draw (2,1) -- +(1,1) -- +(1,-1) -- +(0,0);
		\draw (2,-1.2) to (3,-1.2);
		\draw (3,1.6) to[out=right,in=left] (6,0.4);
		\draw (3,0.4) to[out=right,in=left] (6,-1.2);
		\draw (9,0) -- +(-2,2) -- +(-2,-2) -- +(0,0);
		\draw (7,1) -- +(-1,1) -- +(-1,-1) -- +(0,0);
		\draw (7,-1.2) to (6,-1.2);
		\draw (3,-1.2) to[out=right,in=left] (6,1.6);
		\node at (1.2,0) {$\varpi$};
		\node at (7.8,0) {$\varpi$};
		\node at (2.6,1) {$\varpi$};
		\node at (6.4,1) {$\varpi$};
	\end{tikzpicture}\end{center}
	Lemma \ref{22286} implies that $\gamma_1$ is of order $2$ and $\gamma_2$ is of order $3$. Let $B_1$
	be the ball represented by the marked arrow
	\begin{center}\begin{tikzpicture}[scale=0.45]
		\draw (0,0) -- +(2,2) -- +(2,-2) -- +(0,0);
		\draw (2,1.2) to (3,1.2);
		\draw (2,-1.2) to (3,-1.2) node[right]{$\bigstar$};
		\node at (1.2,0) {$\varpi$};
	\end{tikzpicture}\end{center}
	and let $B_2$ be the ball represented by the marked arrow
	\begin{center}\begin{tikzpicture}[scale=0.45]
		\draw (0,0) -- +(2,2) -- +(2,-2) -- +(0,0);
		\draw (2,1.2) to (3,1.2) node[right]{$\bigstar$};
		\draw (2,-1.2) to (3,-1.2);
		\node at (1.2,0) {$\varpi$};
	\end{tikzpicture}\end{center}
	Composing the operation $\varpi$ several times, one gets operations that look like binary trees. Call them
	$\varpi$--tree operations. Now define $A_1$ to be the set of all balls $B\subset B_1$ which are represented
	by $\varpi$--tree operations. Similarly, define $A_2$ to be the set of all balls $B\subset B_2$ which are
	represented by $\varpi$--tree operations. For example, the following marked arrows represent balls in $A_1$
	\begin{center}
	\begin{tikzpicture}[scale=0.45]
		\draw (0,0) -- +(2,2) -- +(2,-2) -- +(0,0);
		\draw (2,1.2) to (3,1.2);
		\draw (2,-1.2) to (3,-1.2) node[right]{$\bigstar$};
		\node at (1.2,0) {$\varpi$};
	\end{tikzpicture}
	\hspace{3mm}
	\begin{tikzpicture}[scale=0.45]
		\draw (0,0) -- +(2,2) -- +(2,-2) -- +(0,0);
		\draw (2,-1) -- +(1,1) -- +(1,-1) -- +(0,0);
		\draw (2,1.2) to (4,1.2);
		\draw (3,-1.6) to (4,-1.6);
		\draw (3,-0.4) to (4,-0.4) node[right]{$\bigstar$};
		\node at (1.2,0) {$\varpi$};
		\node at (2.6,-1) {$\varpi$};
	\end{tikzpicture}
	\hspace{3mm}
	\begin{tikzpicture}[scale=0.45]
		\draw (0,0) -- +(2,2) -- +(2,-2) -- +(0,0);
		\draw (2,-1) -- +(1,1) -- +(1,-1) -- +(0,0);
		\draw (2,1.2) to (4,1.2);
		\draw (3,-1.6) to (4,-1.6) node[right]{$\bigstar$};
		\draw (3,-0.4) to (4,-0.4);
		\node at (1.2,0) {$\varpi$};
		\node at (2.6,-1) {$\varpi$};
	\end{tikzpicture}
	\end{center}
	and the following marked arrows represent balls in $A_2$
	\begin{center}
	\begin{tikzpicture}[scale=0.45]
		\draw (0,0) -- +(2,2) -- +(2,-2) -- +(0,0);
		\draw (2,1.2) to (3,1.2) node[right]{$\bigstar$};
		\draw (2,-1.2) to (3,-1.2);
		\node at (1.2,0) {$\varpi$};
	\end{tikzpicture}
	\hspace{3mm}
	\begin{tikzpicture}[scale=0.45]
		\draw (0,0) -- +(2,2) -- +(2,-2) -- +(0,0);
		\draw (2,1) -- +(1,1) -- +(1,-1) -- +(0,0);
		\draw (2,-1.2) to (4,-1.2);
		\draw (3,1.6) to (4,1.6) node[right]{$\bigstar$};
		\draw (3,0.4) to (4,0.4);
		\node at (1.2,0) {$\varpi$};
		\node at (2.6,1) {$\varpi$};
	\end{tikzpicture}
	\hspace{3mm}
	\begin{tikzpicture}[scale=0.45]
		\draw (0,0) -- +(2,2) -- +(2,-2) -- +(0,0);
		\draw (2,1) -- +(1,1) -- +(1,-1) -- +(0,0);
		\draw (2,-1.2) to (4,-1.2);
		\draw (3,1.6) to (4,1.6);
		\draw (3,0.4) to (4,0.4) node[right]{$\bigstar$};
		\node at (1.2,0) {$\varpi$};
		\node at (2.6,1) {$\varpi$};
	\end{tikzpicture}
	\end{center}
	It is straightforward to check $\gamma_1\cdot A_2\subset A_1$ and $\gamma_2\cdot A_1\subset A_2$ and 
	$\gamma_2^2\cdot A_1\subset A_2$, so the proof is completed.
\end{proof}

Next we give sufficient conditions for infiniteness of operad groups.

\begin{prop}\label{96213}
	Let $\calO$ be a planar, symmetric or braided operad satisfying the calculus of fractions and let
	$X$ be a split object of $\calS(\calO)$. Then $\Gamma:=\pi_1(\calO,X)$ contains an infinite
	cyclic subgroup and is therefore infinite.
\end{prop}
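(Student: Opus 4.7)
The plan is to construct a Thompson-type shift element $\gamma\in\Gamma$ from the split operation $\varpi$ and to detect its infinite order by iterating its action on a carefully chosen ball. This approach handles the planar, symmetric, and braided cases uniformly.

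First, by the split condition there is an arrow $\varpi\co A_1\otimes X\otimes A_2\otimes X\otimes A_3\rightarrow X$. As in the proof of Proposition \ref{22947}, I would restrict to the case that $X$ is a single color and $A_1=A_2=A_3=I$, so $\varpi$ is simply a binary operation whose inputs and output have color $X$; the general case proceeds with the analogous modifications. Define the left- and right-combed ternary trees
\[L_3:=(\varpi\otimes\id)*\varpi,\qquad R_3:=(\id\otimes\varpi)*\varpi\co X^{\otimes 3}\rightarrow X\]
in $\calS(\calO_{\mathrm{pl}})\subset\calS(\calO)$, and let $\gamma\in\Gamma$ be the element represented by the span $X\xleftarrow{L_3}X^{\otimes 3}\xrightarrow{R_3}X$.

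Next, pick the ball $B_0=[\varpi,m]$ where $m$ marks only the left input of $\varpi$. The main step is to show $\gamma\cdot B_0\subsetneq B_0$. I would compute $\gamma\cdot B_0$ using the evident square filling $(\id_{X^{\otimes 3}},\id\otimes\varpi)$ of $(R_3,\varpi)$, which immediately yields $\gamma\cdot B_0=[L_3,m']$ with $m'$ marking only the first coordinate of $X^{\otimes 3}$. The inclusion $\gamma\cdot B_0\subset B_0$ is then witnessed by the square filling $(\id,\varpi\otimes\id)$ of $(L_3,\varpi)$: the pull-back of $m$ along $\varpi\otimes\id$ marks the first two coordinates of $X^{\otimes 3}$, which contains the single coordinate marked by $m'$. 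The reverse inclusion fails because, for the same filling, $m'$ marks fewer coordinates than the pull-back of $m$.

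Finally, by Remark \ref{12397} the $\Gamma$-action on semi-partitions preserves $\subset$, and it is a bijection with inverse given by $\gamma^{-1}$, so it also preserves strict containment. Iterating the previous step gives a strictly descending chain
\[\cdots\subsetneq\gamma^n\cdot B_0\subsetneq\gamma^{n-1}\cdot B_0\subsetneq\cdots\subsetneq B_0,\]
so $\gamma^n\cdot B_0\neq B_0$ for every $n\geq 1$, hence $\gamma^n\neq 1$ and $\langle\gamma\rangle\cong\ZZ$ embeds in $\Gamma$. The only real work lies in the two explicit square-filling computations; everything else follows formally from the machinery of Section \ref{96238}, and I do not foresee a serious obstacle.
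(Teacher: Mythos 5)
Your proposal is correct, and it takes a genuinely different (though related) route from the paper. Both proofs construct the same Thompson-like shift $\gamma$ from the span $\bigl(L_3,R_3\bigr)=\bigl((\varpi\otimes\id)*\varpi,(\id\otimes\varpi)*\varpi\bigr)$, but they detect its infinite order differently. The paper represents $\gamma^n$ as a span of left/right combs on $n+2$ leaves, invokes Lemma \ref{69343} to strip the final $\varpi$, and then argues directly that no equalizing arrow exists because the two remaining maps to $X^{\otimes 2}$ split as incompatible tensor products $(\text{comb})\otimes\id_X$ and $\id_X\otimes(\text{comb})$. You instead let $\gamma$ act on the ball $B_0=[\varpi,m]$, compute $\gamma\cdot B_0=[L_3,m']$ via the tautological square filling $(\id,\id\otimes\varpi)$, and verify $\gamma\cdot B_0\subsetneq B_0$ by checking the marking inclusion for the filling $(\id,\varpi\otimes\id)$ in one direction and its failure in the other (which suffices by the paper's observation that one square filling decides all of them). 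Remark \ref{12397} and invertibility of the action then iterate this to a strictly descending chain, so $\gamma^n\cdot B_0\neq B_0$ and hence $\gamma^n\neq 1$ for all $n\geq 1$. Your route is arguably more in the spirit of Section \ref{96238}, reusing the semi-partition machinery (which the paper explicitly notes extends to the braided case at the start of Section 4), whereas the paper's argument is self-contained and sidesteps the action entirely. Both are correct; the only minor caveat in yours is that the verification that the action on semi-partitions (and Remark \ref{12397}) is available in the braided case is asserted rather than re-proved in the paper, but you are entitled to rely on that assertion exactly as Proposition \ref{22947} does.
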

\begin{proof}
	Because $X$ is split, there is an arrow
	\[\varpi\co A_1\otimes X\otimes A_2\otimes X\otimes A_3\rightarrow X\]
	For better readability, we assume that $X$ is a single color and $A_1=A_2=A_3=I$. The construction 
	goes the same way in the general case (with obvious modifications). So we assume that $\varpi$ is just
	an operation with two inputs of color $X$ and an output of color $X$. Define $\gamma$ to be the
	following element
	\begin{center}\begin{tikzpicture}[scale=0.45]
		\draw (0,0) -- +(2,2) -- +(2,-2) -- +(0,0);
		\draw (2,1) -- +(1,1) -- +(1,-1) -- +(0,0);
		\draw (9,0) -- +(-2,2) -- +(-2,-2) -- +(0,0);
		\draw (7,-1) -- +(-1,1) -- +(-1,-1) -- +(0,0);
		\draw (3,1.6) to[out=right,in=left] (6,1) to (7,1);
		\draw (3,0.4) to[out=right,in=left] (6,-0.4);
		\draw (2,-1) to (3,-1) to[out=right,in=left] (6,-1.6);
		\node at (1.2,0) {$\varpi$};
		\node at (7.8,0) {$\varpi$};
		\node at (6.4,-1) {$\varpi$};
		\node at (2.6,1) {$\varpi$};
	\end{tikzpicture}\end{center}
	Formally, $\gamma$ is represented by the span
	$\big((\varpi,\id)*\varpi,(\id,\varpi)*\varpi\big)$. We claim that $\gamma$ has 
	infinite order. The element $\gamma^n$ is represented by the span (for better readability, we use the same symbol
	$\id$ for different identities)
	\[\big((\varpi,\id)*...*(\varpi,\id)*\varpi,
		(\id,\varpi)*...*(\id,\varpi)*\varpi\big)\]
	By Lemma \ref{69343}, this span is null-homotopic if and only of the span (remove the last $\varpi$ in both arrows)
	\[\big((\varpi,\id)*...*(\varpi,\id),
		(\id,\varpi)*...*(\id,\varpi)\big)=:(\varpi_1,\varpi_2)\]
	is null-homotopic. This is true if and only if there is an arrow $r$ with $r\varpi_1=r\varpi_2$. The arrow
	$r$ can be chosen to lie in $\calS(\calO_{\mathrm{pl}})$. But note that $\varpi_1$ splits as
	\[\varpi_1=\big((\varpi,\id)*...*(\varpi,\id)*\varpi\big)\otimes\id_X\]
	and $\varpi_2$ splits as
	\[\varpi_2=\id_X\otimes\big((\id,\varpi)*...*(\id,\varpi)*\varpi\big)\]
	It can easily be seen that such an arrow $r$ cannot exist because otherwise operations with a different number
	of inputs must be equal. Consequently, each $\gamma^n$ is non-trivial and therefore $\gamma$ has infinite order.
\end{proof}

\section{Applications}

Observe the $1$--dimensional planar cube cutting operads and the $d$--dimensional symmetric cube cutting operads from 
\cite{thu:ogatfp}*{Subsection 3.5}. They all satisfy the (cancellative) calculus of fractions.
Furthermore, they are monochromatic and possess operations of arbitrarily high degree. From Remarks \ref{92437} and 
\ref{71138} it follows that all objects (except the uninteresting unit object) are split and progressive. So Theorem
\ref{20869} is applicable to these operads. Furthermore, the corresponding operad groups are all infinite by Proposition
\ref{96213} and non-amenable in the symmetric case by Proposition \ref{22947}.

\vspace{2mm}

Observe now the local similarity operads. Let $\mathrm{Sim}_X$ be a finite similarity structure on the compact ultrametric space $X$. 
When choosing a ball in each $\mathrm{Sim}_X$--equivalence of balls, we obtain a symmetric operad with transformations $\calO$.
The colors of $\calO$ are the chosen balls. We choose $X$ for the $\mathrm{Sim}_X$--equivalence class $[X]$. We already know
that $\calO$ satisfies the (cancellative) calculus of fractions. In \cite{sa-th:laa}*{Definition 3.1} we called $\mathrm{Sim}_X$ dually 
contracting if there are two disjoint proper subballs $B_1,B_2$ of $X$ together with similarities $X\rightarrow B_i$ in $\mathrm{Sim}_X$. 
This easily implies that $X$ is split.
\begin{lem}
	The color $X$ is progressive provided $\mathrm{Sim}_X$ is dually contracting.
\end{lem}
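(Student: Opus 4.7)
The plan is to verify the criterion given in Remark~\ref{71138}: for every operation $\theta$ in $\calO$ with output color $X$ I must exhibit an operation $\phi$ whose output color equals some input color of $\theta$ and which has at least one input of color $X$.

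First I would unpack $\theta$. An operation with output color $X$ in the local similarity operad amounts to a partition of the chosen representative ball $X$ into finitely many sub-balls $D_1,\ldots,D_k$, together with similarities from the chosen representatives $C_j$ (of color $[D_j]$) onto each $D_j$; the inputs of $\theta$ are the colors $[D_1],\ldots,[D_k]$. If $k=1$ then $[D_1]=[X]$ and one may take $\phi=\id_X$, so I may assume $k\geq 2$, whence every $D_j$ is a proper sub-ball of $X$ of positive diameter.

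The main step is to find a sub-ball of some $D_j$ of color $[X]$. Let $f\co X\to B_1$ be a similarity in $\mathrm{Sim}_X$ witnessing dually contracting, and set $r:=\operatorname{diam}(B_1)/\operatorname{diam}(X)<1$. Using that a finite similarity structure is closed under composition and restriction, iterating $f$ on its image produces sub-balls $B^{(n)}\subseteq B_1\subseteq X$ of color $[X]$ and diameter $r^n\operatorname{diam}(X)$. Choose $n$ so that $r^n\operatorname{diam}(X)<\min_j\operatorname{diam}(D_j)$. Since the $D_j$ cover $X$, pick any $p\in B^{(n)}$ and the unique $j$ with $p\in D_j$; the balls $B^{(n)}$ and $D_j$ share $p$, hence are ultrametrically nested, and the diameter choice forces $B^{(n)}\subseteq D_j$. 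It remains to build $\phi$: a similarity $\sigma\co C_j\to D_j$ exists by definition of the color $[D_j]$, so the pullback $B':=\sigma^{-1}(B^{(n)})\subseteq C_j$ is a sub-ball with $[B']=[X]$; the clopen complement $C_j\setminus B'$ in the compact ultrametric ball $C_j$ decomposes into finitely many balls $B'_1,\ldots,B'_m$, giving a partition $C_j=B'\sqcup B'_1\sqcup\cdots\sqcup B'_m$ that defines an operation $\phi$ in $\calO$. This $\phi$ has output color $[C_j]=[D_j]$, an input color of $\theta$, and one input of color $[B']=[X]$, verifying Remark~\ref{71138}.

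The chief subtlety is not a direct geometric containment argument within the original partition (which may fail, since $B_1$ or $B_2$ could straddle several $D_j$) but the shrinking iteration: making the $X$-similar ball small enough that ultrametric rigidity automatically lodges it inside a single piece $D_j$.
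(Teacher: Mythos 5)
Your argument follows the same line as the paper's: the paper delegates the shrinking step to \cite{sa-th:laa}*{Lemma 3.7}, which produces a small ball $\mathrm{Sim}_X$--equivalent to $X$ inside one piece of the partition, and then constructs the required operation and invokes Remark~\ref{71138}; you simply prove that shrinking step by hand by iterating the dually contracting similarity and using the ultrametric nesting of balls.

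One small repair is needed in your step 2. The claim that $k\geq 2$ forces ``every $D_j$ is a proper sub-ball of $X$ of positive diameter'' is not justified: a compact ultrametric space may have isolated points, and a clopen partition into balls can then contain singleton pieces, making $\min_j\operatorname{diam}(D_j)=0$ and leaving no $n$ with $r^n\operatorname{diam}(X)$ below it. The fix is cheap: instead of comparing against $\min_j\operatorname{diam}(D_j)$, take $\delta>0$ to be a Lebesgue number of the finite open cover $\{D_1,\dots,D_k\}$ of the compact space $X$ and choose $n$ with $r^n\operatorname{diam}(X)<\delta$; then $B^{(n)}$ automatically lies inside a single $D_j$, and this $D_j$ necessarily has positive diameter because it contains $B^{(n)}\cong X$. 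With that modification the rest of your construction of $\phi$ via the pullback $B'=\sigma^{-1}(B^{(n)})$ and the clopen complement decomposition is correct, and the appeal to Remark~\ref{71138} completes the proof exactly as in the paper.
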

\begin{proof}
	Let $\theta=(f_1,...,f_l)$ be an operation with output $X$. This means that the $f_i\co B_i\rightarrow X$ are 
	$\mathrm{Sim}_X$--embeddings (i.e.~$f_i$ yields a similarity in $\mathrm{Sim}_X$ when the codomain is restricted to 
	the image) such that the images of the $f_i$ are pairwise disjoint and their union is $X$. So the images
	$\op{im}(f_i)$ form a partition $\calP$ of $X$ into balls. If we apply \cite{sa-th:laa}*{Lemma 3.7} to this
	partition, we find a $j$ and a small ball $B$ which is $\mathrm{Sim}_X$--equivalent to $X$ and such that
	$B\subset\op{im}(f_j)$. Using this, we can construct an operation $\psi=(g_1,...,g_k)$ with codomain $B_j$ such
	that $g_1\co X\rightarrow B_j$. From Remark \ref{71138} it now follows that $X$ is progressive.
\end{proof}
Consequently, Theorem \ref{20869} is applicable to dually contracting local similarity operads. Furthermore, the corresponding operad 
groups based at $X$ are all infinite by Proposition \ref{96213} and non-amenable by Proposition \ref{22947}.

\subsection{$L^2$--homology}\label{72248}

For a group $G$, let $l^2G$ be the Hilbert space with Hilbert base $G$. Thus, elements in $l^2G$ are formal sums
$\sum_{g\in G}\lambda_g g$ with $\lambda_g\in\CC$ such that $\sum_{g\in G}|\lambda_g|^2<\infty$. Left multiplication with
elements in $G$ induces an isometric $G$--action on $l^2G$. Denote the set of $G$--equivariant linear bounded operators
$l^2G\rightarrow l^2G$ by $\calB^G(l^2G)$, a subalgebra of the algebra of all bounded linear operators $\calB(l^2G)$.
\emph{Right} multiplication with an element $\gamma\in G$ induces a $G$--equivariant linear bounded operator
$\gamma\into\rho\co l^2G\rightarrow l^2G$. This induces a homomorphism $\rho\co\CC G\rightarrow\calB(l^2G)$ from the group ring
into the algebra of bounded linear operators, i.e.~$1\into\rho=\id$ and $(\gamma_1\gamma_2)\into\rho=
(\gamma_1\into\rho)*(\gamma_1\into\rho)$. The closure of the image of this map with respect to the weak or strong
operator topology is called the von Neumann algebra $\calN G$ associated to $G$. It is equal to the subalgebra of all
$G$--equivariant bounded linear operators $\calB^G(l^2G)\subset\calB(l^2G)$ \cite{lue:lta}*{Example 9.7}.

We will cite some known results about this von Neumann algebra in order to deduce a corollary for $l^2$--homology.
\begin{itemize}
	\item ({\it $\calN$ is inductive}) Let $H$ be a subgroup of $G$ and $A\in\calB^H(l^2H)$. Then 
	$\CC G\otimes_{\CC H}l^2H\subset l^2G$ is a dense $G$--invariant subspace and
	\[\id_{\CC G}\otimes_{\CC H}A\co \CC G\otimes_{\CC H}l^2H\rightarrow \CC G\otimes_{\CC H}l^2H\]
	is a $G$--equivariant linear map which is bounded with respect to the norm coming from $l^2G$. Consequently, 
	it can be extended to an element in $\calB^G(l^2G)$. We obtain a map $\calN H\rightarrow\calN G$ which is an 
	injective ring homomorphism. So if $H<G$, then $\calN H$ is a subring of $\calN G$. Even more is true: 
	It is a faithfully flat ring extension \cite{lue:lta}*{Theorem 6.29}. From this, it
	follows easily that the coefficient system $\calN$ is inductive.
	\item ({\it $\calN$ is K\"unneth}) If $H_1,H_2$ are two subgroups of $G$ which commute in $G$, 
	i.e.~$h_1h_2=h_2h_1$ for all $h_1\in H_1$ and $h_2\in H_2$, then $\calN H_1$ and $\calN H_2$ commute in 
	$\calN G$. In particular, $\calN H_1\otimes_\CC\calN H_2$ is a subring of $\calN G$. This implies, using a 
	standard homological algebraic argument \cite{lue:lta}*{Lemma 12.11(3)}, that $\calN$ is K\"unneth.
	\item ({\it $H_0$ and amenability}) Going back to a result of Kesten, the $0$'th group homology of a group $G$ 
	with coefficients in the von Neumann algebra $\calN G$ vanishes if and only if $G$ is non-amenable
	\cite{lue:lta}*{Lemma 6.36}. So we have
	\[H_0(G,\calN G)=0\hspace{5mm}\Longleftrightarrow\hspace{5mm}G\text{ non-amenable}\]
	\item ({\it Relationship with $l^2$--homology}) From \cite{lue:lta}*{Lemma 6.97} or \cite{lue:lta}*{Theorem 6.24(3)}
	we get for groups $G$ of type $F_{\infty}$ and every $k\geq 0$
	\[H_k(G,\calN G)=0\hspace{5mm}\Longleftrightarrow\hspace{5mm}H_k(G,l^2G)=0\]
\end{itemize}

Applying Theorem \ref{20869} to these observations, we get the following corollary.
\begin{cor}
	Let $\calO$ be a planar or symmetric operad which satisfies the calculus of fractions.
	Let $X$ be a split progressive object of $\calS(\calO)$. Set $\Gamma:=\pi_1(\calO,X)$ and assume
	that $\Gamma$ is non-amenable. Then
	\[H_k(\Gamma,\calN\Gamma)=0\]
	for all $k\geq 0$. If $\Gamma$ is also of type $F_\infty$ (e.g.~if the conditions in \cite{thu:ogatfp}*{Theorem 4.3}
	are satisfied), we also have
	\[H_k(\Gamma,l^2\Gamma)=0\]
	for all $k\geq 0$.
\end{cor}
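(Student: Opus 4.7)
The plan is to verify the hypotheses of Theorem \ref{20869} for the coefficient system $\calM=\calN$ and then specialize to $l^2$--homology using the cited equivalences.

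First I would invoke the three bulleted facts about $\calN$ listed immediately before the corollary. The inductivity bullet tells us that $\calN$ is an inductive coefficient system (without any property restriction $\mathfrak{P}$), and the K\"unneth bullet tells us that $\calN$ is a K\"unneth coefficient system. So $\calN$ satisfies the hypotheses on $\calM$ demanded by Theorem \ref{20869} in its unrestricted form, and no auxiliary property $\mathfrak{P}$ is needed.

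Next I would handle the degree-zero input. Since $\Gamma$ is assumed non-amenable, the Kesten bullet gives $H_0(\Gamma,\calN\Gamma)=0$. Combined with the fact that $X$ is split and progressive and that $\calO$ satisfies the calculus of fractions, Theorem \ref{20869} applies verbatim and yields $H_k(\Gamma,\calN\Gamma)=0$ for every $k\geq 0$, which is the first conclusion.

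For the $l^2$--statement, I would simply apply the fourth bullet: when $\Gamma$ is of type $F_\infty$, the vanishing $H_k(\Gamma,\calN\Gamma)=0$ is equivalent to $H_k(\Gamma,l^2\Gamma)=0$ for every $k\geq 0$, so the second conclusion is immediate from the first. I do not expect any real obstacle here; the whole corollary is a packaging of Theorem \ref{20869} together with the four standard facts about the group von Neumann algebra, the only mildly subtle point being to notice that no restricting property $\mathfrak{P}$ is required because $\calN$ is inductive and K\"unneth for arbitrary groups.
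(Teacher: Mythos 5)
Your proposal is correct and matches the paper's intended argument exactly: the paper simply states that the corollary follows by applying Theorem \ref{20869} to the four bulleted observations about $\calN$, which is precisely the reasoning you spell out. Nothing is missing; you also correctly observe that the unrestricted form of the theorem applies since $\calN$ is K\"unneth and inductive without any property restriction $\mathfrak{P}$.
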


From Proposition \ref{22947}, we get the following corollary.
\begin{cor}
	Let $\calO$ be a symmetric operad which satisfies the calculus of fractions.
	Let $X$ be a split progressive object of $\calS(\calO)$. Set $\Gamma:=\pi_1(\calO,X)$. Then
	\[H_k(\Gamma,\calN\Gamma)=0\]
	for all $k\geq 0$. If $\Gamma$ is also of type $F_\infty$, we have
	\[H_k(\Gamma,l^2\Gamma)=0\]
	for all $k\geq 0$.
\end{cor}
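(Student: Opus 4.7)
The plan is essentially to observe that the non-amenability hypothesis of the previous corollary is now automatic and can be discharged using Proposition \ref{22947}. More precisely, first I would note that the hypotheses on $\calO$ and $X$ match exactly those of Proposition \ref{22947}: we have a symmetric operad $\calO$ satisfying the calculus of fractions and $X$ is a split object of $\calS(\calO)$. That proposition then produces a non-abelian free subgroup of $\Gamma=\pi_1(\calO,X)$, which forces $\Gamma$ to be non-amenable.

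Next I would invoke the preceding corollary with this non-amenability for free. Since $\calO$ is in particular a planar or symmetric operad satisfying the calculus of fractions, $X$ is split and progressive, and $\Gamma$ has just been shown to be non-amenable, the hypotheses of that corollary are met, and we conclude $H_k(\Gamma,\calN\Gamma)=0$ for all $k\geq 0$. For the $l^2$-homology statement, I would observe that under the additional assumption that $\Gamma$ is of type $F_\infty$, the bullet point on the relationship between $H_\ast(\Gamma,\calN\Gamma)$ and $H_\ast(\Gamma,l^2\Gamma)$ in Subsection \ref{72248} (citing \cite{lue:lta}) immediately transfers the vanishing.

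There is really no obstacle here, as this corollary is simply a specialization of the preceding one: the only new input is that in the symmetric setting the split condition on $X$ already implies non-amenability of $\Gamma$, so the hypothesis can be dropped from the statement. The proof consists of citing Proposition \ref{22947} and the preceding corollary.
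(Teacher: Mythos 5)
Your proposal is correct and matches the paper's own (one-line) derivation: the paper introduces this corollary with ``From Proposition \ref{22947}, we get the following corollary,'' i.e.\ it uses Proposition \ref{22947} to deduce non-amenability of $\Gamma$ and then invokes the preceding corollary, exactly as you do.
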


From the remarks at the beginning of this section and from \cite{thu:ogatfp}*{Subsection 4.6}, we get the following corollary.
\begin{cor}\label{57450}
	Let $\calO$ be a symmetric cube cutting operad or a local similarity operad coming from a dually contracting finite similarity structure
	$\mathrm{Sim}_X$. In the first case, let $A$ be any object in $\calS(\calO)$ different from the monoidal 
	unit $I$. In the second case, let $A$ be the object $X$. Set $\Gamma=\pi_1(\calO,A)$. Then
	\[H_k(\Gamma,\calN\Gamma)=0\]
	for all $k\geq0$. Assume furthermore that $\mathrm{Sim}_X$ is rich in ball contractions \cite{fa-hu:fpo}*{Definition 5.12},
	in other words, the associated operad $\calO$ is color-tame in the sense of \cite{thu:ogatfp}*{Definition 4.2}. Then we also have
	\[H_k(\Gamma,l^2\Gamma)=0\]
	for all $k\geq 0$. 
\end{cor}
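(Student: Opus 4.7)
The plan is to reduce the statement to the two preceding corollaries by verifying their hypotheses for both families of operads. For the symmetric $d$-dimensional cube cutting operads, $\calO$ is monochromatic with operations of arbitrarily high degree and satisfies the cancellative calculus of fractions, so by Remarks \ref{92437} and \ref{71138} every object $A\neq I$ is both split and progressive. For a local similarity operad coming from a dually contracting $\mathrm{Sim}_X$, the object $A=X$ is split: the two disjoint proper subballs $B_1,B_2$ together with the $\mathrm{Sim}_X$--similarities $X\rightarrow B_i$ assemble into an arrow $A_1\otimes X\otimes A_2\otimes X\otimes A_3\rightarrow X$ of the required form. Progressivity of $X$ is precisely the content of the Lemma proved just above the corollary.

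Next I would invoke Proposition \ref{22947}, which applies in both settings since splitness has just been verified; this yields a non-abelian free subgroup of $\Gamma=\pi_1(\calO,A)$, so $\Gamma$ is non-amenable. Combined with the Kesten criterion recalled in Subsection \ref{72248}, this gives $H_0(\Gamma,\calN\Gamma)=0$. Together with the facts that $\calN$ is inductive and K\"unneth, all hypotheses of Theorem \ref{20869} are satisfied (with $\mathfrak{P}$ the trivial property, or equivalently by appealing directly to the preceding corollary for non-amenable $\Gamma$), so $H_k(\Gamma,\calN\Gamma)=0$ for every $k\geq 0$.

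For the $l^2$--statement, the additional hypothesis is used exclusively to promote $\Gamma$ to type $F_\infty$. In the cube cutting case this is established in \cite{thu:ogatfp}*{Subsection 4.6}; in the local similarity case, the hypothesis of being rich in ball contractions is equivalent, via the cited identification with color-tameness, to \cite{thu:ogatfp}*{Definition 4.2}, which together with the finiteness properties of the operad implies that $\pi_1(\calO,X)$ is of type $F_\infty$ by the main finiteness theorem of \cite{thu:ogatfp}. Once type $F_\infty$ is in hand, the L\"uck equivalence $H_k(G,\calN G)=0\Leftrightarrow H_k(G,l^2 G)=0$ recalled in Subsection \ref{72248} converts the previous vanishing into the desired statement.

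There is no serious obstacle to this argument; the result is a synthesis of previously established ingredients, and the only genuinely new input — progressivity of $X$ in the dually contracting local similarity setting — has already been handled by the preceding Lemma using \cite{sa-th:laa}*{Lemma 3.7}. The only bookkeeping issue is to confirm that the type $F_\infty$ statements in \cite{thu:ogatfp}*{Subsection 4.6} cover exactly the objects $A$ considered here, which is immediate by inspection.
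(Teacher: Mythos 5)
Your proposal is correct and follows the same route the paper takes: verifying split/progressive for both families via the remarks at the start of the Applications section (and the Lemma on progressivity in the dually contracting case), then specializing the preceding corollary for symmetric operads (itself built on Proposition \ref{22947} and Theorem \ref{20869}), and finally using the color-tameness/$F_\infty$ input from \cite{thu:ogatfp}*{Subsection 4.6} together with L\"uck's equivalence to pass to $l^2$-homology. The paper leaves most of this implicit; you have just spelled it out.
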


In particular, we obtain that the Higman--Thompson groups $V_{n,r}$ and the higher-dimensional Thompson groups $nV$ (see \cite{bri:hdt})
are $l^2$-invisible. This answers a question posed by L\"uck \cite{lue:lta}*{Remark 12.4}: The \emph{Zero-in-the-spectrum conjecture} by Gromov
says that whenever $M$ is an aspherical closed Riemannian manifold, then there is always a dimension $p$ such that
zero is contained in the spectrum of the minimal closure of the Laplacian acting on smooth $p$--forms on the
universal covering of $M$:
\[\exists_{p\geq 0}\ \ 0\in\op{spec}\big(\op{cl}(\Delta_p)\co D\subset L^2\Omega^p(\widetilde{M})
	\rightarrow L^2\Omega^p(\widetilde{M})\big)\]
By \cite{lue:lta}*{Lemma 12.3}, this is equivalent to
\[\exists_{p\geq 0}\ \ H_p(G,\calN G)\neq 0\]
for $G=\pi_1(M)$. Dropping Poincar\'e duality from the assumptions, we arrive at the following question: If $G$ is a
group of type $F$ (i.e.~there exists a compact classifying space for $G$), then is there a $p$ with 
$H_p(G,\calN G)\neq 0$? Relaxing the assumption on the finiteness property, we arrive at the following question: If $G$ is a
group of type $F_\infty$, then is there a $p$ with $H_p(G,\calN G)\neq 0$? Corollary \ref{57450}
gives explicit counterexamples to this question.

\subsection{Cohomology with coefficients in the group ring}

We want to apply the cohomological version of Theorem \ref{20869} to $\calM G:=\ZZ G$. To
this end, we want to show that $\ZZ G$ is $FP_\infty$--K\"unneth and $FP_\infty$--inductive (in cohomology).
The first property follows from \cite{sa-th:laa}*{Proposition 4.3}. The second property follows from the
observation that $\ZZ G$ is a free $\ZZ H$--module if $H<G$ and that group cohomology of groups of type 
$FP_\infty$ commutes with direct limits in the coefficients \cite{bro:cog}*{Theorem VIII.4.8}.
From Theorem \ref{20869}, Proposition \ref{96213} and $H^0(G,\ZZ G)=(\ZZ G)^G=0$ for infinite $G$, we obtain:
\begin{cor}
	Let $\calO$ be a planar or symmetric operad which satisfies the calculus of fractions.
	Let $X$ be a split progressive object of $\calS(\calO)$. Set $\Gamma:=\pi_1(\calO,X)$ and assume
	that $\Gamma$ is of type $FP_\infty$ (e.g.~if the conditions in Theorem \cite{thu:ogatfp}*{Theorem 4.3} are satisfied). Then
	\[H^k(\Gamma,\ZZ\Gamma)=0\]
	for all $k\geq 0$.
\end{cor}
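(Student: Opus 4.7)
The plan is to invoke the cohomological version of Theorem \ref{20869} with $\mathfrak{P}$ taken to be ``of type $FP_\infty$'' and $\calM G := \ZZ G$. The property $FP_\infty$ is closed under finite products (a product of classifying spaces with finite skeleta in each dimension again has finite skeleta in each dimension), so $\mathfrak{P}$ is admissible. By hypothesis $\Gamma$ satisfies $\mathfrak{P}$. Three things remain: verify that $\calM=\ZZ(-)$ is $FP_\infty$--K\"unneth, verify that it is $FP_\infty$--inductive, and verify the base hypothesis $H^0(\Gamma,\ZZ\Gamma)=0$.

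For the base hypothesis, note that $H^0(G,\ZZ G) = (\ZZ G)^G$ consists of the $G$--invariants in $\ZZ G$, and for infinite $G$ no nonzero finite formal sum is invariant under left translation, so $H^0(G,\ZZ G)=0$. Since $X$ is split, Proposition \ref{96213} gives that $\Gamma=\pi_1(\calO,X)$ contains an infinite cyclic subgroup and is in particular infinite, so the base hypothesis is satisfied.

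For the $FP_\infty$--inductive property, let $H<G$ with $H$ of type $FP_\infty$ and suppose $H^k(H,\ZZ H)=0$. As a left $\ZZ H$--module, $\ZZ G$ is free on a set of coset representatives, hence a direct sum $\ZZ G \cong \bigoplus_{gH\in G/H}\ZZ H$. Because $H$ is of type $FP_\infty$, group cohomology $H^k(H,-)$ commutes with arbitrary direct sums (equivalently, filtered colimits) in the coefficient module, as in \cite{bro:cog}*{Theorem VIII.4.8}. Therefore $H^k(H,\ZZ G)\cong\bigoplus_{gH}H^k(H,\ZZ H)=0$, which is precisely the inductive condition.

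For the $FP_\infty$--K\"unneth property, one uses that for $G=G_1\times G_2$ with each $G_i$ of type $FP_\infty$ one has $\ZZ G\cong \ZZ G_1\otimes_\ZZ \ZZ G_2$, together with the cohomological K\"unneth spectral sequence; the required implication is established in \cite{sa-th:laa}*{Proposition 4.3}. Given all three properties, the cohomological form of Theorem \ref{20869} applies and yields $H^k(\Gamma,\ZZ\Gamma)=0$ for all $k\geq 0$. The only nontrivial step is the inductive check, and that reduces to the standard fact that cohomology of $FP_\infty$ groups commutes with direct sums in the coefficients; once that input is cited, everything else is bookkeeping on top of the main theorem.
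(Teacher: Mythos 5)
Your proof matches the paper's argument essentially step for step: apply the cohomological $\mathfrak{P}$--version of Theorem \ref{20869} with $\mathfrak{P}=FP_\infty$ and $\calM G=\ZZ G$, verify $FP_\infty$--K\"unneth from \cite{sa-th:laa}*{Proposition 4.3}, verify $FP_\infty$--inductive from freeness of $\ZZ G$ over $\ZZ H$ together with \cite{bro:cog}*{Theorem VIII.4.8}, and use infiniteness of $\Gamma$ (Proposition \ref{96213}) to get $H^0(\Gamma,\ZZ\Gamma)=(\ZZ\Gamma)^\Gamma=0$. One cosmetic remark: your parenthetical justification that $FP_\infty$ is closed under products, phrased in terms of classifying spaces with finite skeleta, is really the argument for $F_\infty$; for $FP_\infty$ one should instead tensor finite-type projective resolutions, though the conclusion is the same and the paper leaves this unremarked.
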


Recall that type $F_\infty$ implies type $FP_\infty$ and note that, in this case, $H^k(\Gamma,\ZZ\Gamma)=0$ for all $k\geq 0$ implies that $\Gamma$ has 
infinite cohomological dimension \cite{bro:cog}*{Propositions VIII.6.1 and VIII.6.7}. 
Unfortunately, this tells us that none of these groups can be of type $F$ and consequently, we cannot find such a group
which is also $l^2$-invisible.

From the remarks at the beginning of this section and from \cite{thu:ogatfp}*{Subsection 4.6}, we get the following corollary.
\begin{cor}
	Let $\calO$ be a planar or symmetric cube cutting operad or a local similarity operad coming from a dually contracting 
	finite similarity structure $\mathrm{Sim}_X$ which is also rich in ball contractions. In the first two cases, let $A$ be any object in 
	$\calS(\calO)$ different from the monoidal unit $I$. In the last case, let $A$ be the object $X$. Set $\Gamma=\pi_1(\calO,A)$. Then
	\[H^k(\Gamma,\ZZ\Gamma)=0\]
	for all $k\geq 0$.
\end{cor}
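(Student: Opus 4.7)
The plan is to deduce this corollary directly from the preceding corollary (which gives $H^k(\Gamma, \ZZ\Gamma) = 0$ for $\Gamma = \pi_1(\calO, X)$ when $\calO$ satisfies the calculus of fractions, $X$ is split progressive, and $\Gamma$ is of type $FP_\infty$) by verifying all three hypotheses in each of the three asserted cases. So my task reduces to a hypothesis-checking exercise for (i) planar cube cutting operads, (ii) symmetric cube cutting operads, and (iii) dually contracting, ball-contraction-rich local similarity operads, with the appropriate choice of base object.

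For the two cube cutting cases, the calculus of fractions has already been recorded at the beginning of the Applications section. These operads are monochromatic and admit operations of arbitrarily large arity, so Remarks \ref{92437} and \ref{71138} immediately imply that every object $A \neq I$ in $\calS(\calO)$ is both split and progressive. The type $FP_\infty$ requirement (in fact $F_\infty$) is furnished by the finiteness results cited via \cite{thu:ogatfp}*{Subsection 4.6}, which is exactly the parenthetical remark in the hypothesis of the previous corollary. Thus all three hypotheses of the previous corollary hold and it applies to give the cohomological vanishing.

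For the local similarity case with $A = X$, calculus of fractions is again already known for these operads. The fact that $X$ is split was noted immediately after the definition of dually contracting, and the lemma just proved in the Applications section establishes that $X$ is progressive under the dually contracting hypothesis. The additional assumption that $\mathrm{Sim}_X$ is rich in ball contractions means the associated operad is color-tame, which by \cite{thu:ogatfp}*{Theorem 4.3} provides type $F_\infty$ and in particular type $FP_\infty$. Again the previous corollary applies.

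In each case one then concludes $H^k(\Gamma, \ZZ\Gamma) = 0$ for all $k \geq 0$ by a single invocation of the previous corollary. There isn't really a substantive obstacle here: every nontrivial ingredient has already been set up, and the only thing to be careful about is matching the base object correctly — any non-unit object in the cube-cutting cases, and specifically $X$ in the local similarity case, since progressiveness of other colors is not addressed by the lemma. Once the base object is chosen compatibly, the proof is a direct citation.
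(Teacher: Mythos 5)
Your proposal is correct and matches the paper's approach: the paper derives this corollary ``from the remarks at the beginning of this section and from \cite{thu:ogatfp}*{Subsection 4.6},'' which is precisely the hypothesis-checking you carry out (calculus of fractions, split and progressive via Remarks \ref{92437}/\ref{71138} or the local-similarity lemma, and $FP_\infty$ from the cited finiteness results). Your note about needing to take $A=X$ in the local similarity case, since progressiveness of other colors is not established, is a correct and worthwhile observation.
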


In particular, we obtain $H^k(F,\ZZ F)=0$ and $H^k(V,\ZZ V)=0$ for all $k\geq 0$. This has been shown before in \cite{br-ge:ait}*{Theorem 7.2}
and in \cite{bro:fpo}*{Theorem 4.21}.

\begin{bibdiv}
\begin{biblist}

\bib{bri:hdt}{article}{
   author={Brin, Matthew G.},
   title={Higher dimensional Thompson groups},
   journal={Geom. Dedicata},
   volume={108},
   date={2004},
   pages={163--192}
}

\bib{bro:cog}{book}{
   author={Brown, Kenneth S.},
   title={Cohomology of groups},
   series={Graduate Texts in Mathematics},
   volume={87},
   publisher={Springer-Verlag},
   place={New York},
   date={1994}
}

\bib{bro:fpo}{article}{
   author={Brown, Kenneth S.},
   title={Finiteness properties of groups},
   booktitle={Proceedings of the Northwestern conference on cohomology of
   groups (Evanston, Ill., 1985)},
   journal={J. Pure Appl. Algebra},
   volume={44},
   date={1987},
   number={1-3},
   pages={45--75}
}

\bib{br-ge:ait}{article}{
   author={Brown, Kenneth S.},
   author={Geoghegan, Ross},
   title={An infinite-dimensional torsion-free ${\rm FP}_{\infty }$ group},
   journal={Invent. Math.},
   volume={77},
   date={1984},
   number={2},
   pages={367--381}
}

\bib{dav:pdg}{article}{
   author={Davis, Michael W.},
   title={Poincar\'e duality groups},
   conference={
      title={Surveys on surgery theory, Vol. 1},
   },
   book={
      series={Ann. of Math. Stud.},
      volume={145},
      publisher={Princeton Univ. Press, Princeton, NJ},
   },
   date={2000},
   pages={167--193}
}

\bib{fa-hu:fpo}{article}{
   author={Farley, Daniel S.},
   author={Hughes, Bruce},
   title={Finiteness properties of some groups of local similarities},
   journal={Proc. Edinb. Math. Soc. (2)},
   volume={58},
   date={2015},
   number={2},
   pages={379--402}
}

\bib{gro:lrm}{article}{
   author={Gromov, M.},
   title={Large Riemannian manifolds},
   conference={
      title={Curvature and topology of Riemannian manifolds},
      address={Katata},
      date={1985},
   },
   book={
      series={Lecture Notes in Math.},
      volume={1201},
      publisher={Springer},
      place={Berlin},
   },
   date={1986},
   pages={108--121}
}

\bib{lue:lta}{book}{
   author={L{\"u}ck, Wolfgang},
   title={$L^2$-invariants: theory and applications to geometry and
   $K$-theory},
   series={Ergebnisse der Mathematik und ihrer Grenzgebiete. 3. Folge. A
   Series of Modern Surveys in Mathematics},
   volume={44},
   publisher={Springer-Verlag},
   place={Berlin},
   date={2002}
}

\bib{sa-th:laa}{article}{
   author={Sauer, Roman},
   author={Thumann, Werner},
   title={$l^2$-invisibility and a class of local similarity groups},
   journal={Compos. Math.},
   volume={150},
   date={2014},
   number={10},
   pages={1742--1754}
}

\bib{thu:ogatfp}{article}{
   author={Thumann, Werner},
   title={Operad groups and their finiteness properties},
   journal={{\tt arXiv:1409.1085v3}},
   date={2016}
}

\end{biblist}
\end{bibdiv}

\end{document}